\documentclass[12pt,a4paper]{amsart}
\usepackage{mathrsfs}
 \usepackage{amsfonts}
 \usepackage{amsthm}
 \usepackage[dvips]{graphicx}
 \usepackage{subfigure}
 \usepackage[all]{xy}
 \usepackage{float}
 \usepackage{cases}
\usepackage{psfrag}
\usepackage[colorlinks=true,pdfpagemode=FullScreen]{hyperref}
\usepackage{amscd}
\usepackage{amssymb}
\usepackage{verbatim}
\usepackage{amsmath}
 \setlength{\oddsidemargin}{-0.8cm}
\setlength{\evensidemargin}{-0.8cm}

\textwidth 15.5cm \textheight 22cm
\parskip 1.5pt
\voffset-1cm

\newtheorem{thm}{Theorem}[section]
\newtheorem{cor}[thm]{Corollary}
\newtheorem{lem}[thm]{Lemma}
\newtheorem{prop}[thm]{Proposition}
\newtheorem{defn}[thm]{Definition}
\newtheorem{rem}[thm]{Remark}

\newcommand{\Z}{\mathbb Z}
\newcommand{\Q}{{\mathbb Q}}
\newcommand{\R}{{\mathbb R}}
\newcommand{\conv}{\mathop{\mathrm{co}}}
\newcommand{\dom}{\mathop{\mathrm{dom}}}
\newcommand{\im}{\mathop{\mathrm{Im}}}

\def\bc{\begin{center}}       \def\ec{\end{center}}
\def\be{\begin{equation}}     \def\ee{\end{equation}}
\def\ba{\begin{array}}        \def\ea{\end{array}}
\def\bea{\begin{eqnarray}}            \def\eea{\end{eqnarray}}
\def\beaa{\begin{eqnarray*}}  \def\eeaa{\end{eqnarray*}}

\def\p{\partial}
\def\sp{f^{\infty}}
\def\sn{f^{-\infty}}
\def\m{\underline{\gamma}}
\def\s{\overline{\gamma}}
\def\T{\Xi}

\begin{document}
\title{Subtleties of the minmax selector}
\author{WEI Qiaoling}

\begin{address}{WEI Qiaoling, Institut de Math\'{e}matiques de
Jussieu, Universit\'{e} Paris 7, 175 rue du chavaleret, 75013,
Paris, France} \email{weiqiaoling@math.jussieu.fr}
\end{address}

\date{\today}
 \maketitle
\begin{abstract}
In this note, we show that the minmax and maxmin critical values of
a function quadratic nondegenerate at infinity are equal when
defined in homology or cohomology with coefficients in a field.
However, by an example of F.~Laudenbach, this is not always true for
coefficients  in a ring and, even in the case of a field, the
minmax-maxmin depends on the  field.
\end{abstract}
\bigskip
\section{Introduction}
Given a Lagrangian submanifold $L$ in the cotangent bundle of a
closed manifold $M$, obtained by Hamiltonian deformation of the zero
section, the minmax selector introduced by J.-C.~Sikorav provides an
almost everywhere defined section $M\rightarrow L$ of the projection
$T^*M\rightarrow M$ restricted to $L$. As noticed by M.~Chaperon
\cite{MC}, this defines weak solutions of smooth Cauchy problems for
Hamilton-Jacobi equations; in the classical case of a convex
Hamiltonian, the minmax is a minimum and the minmax solution
coincides with the viscosity solution, which is not always the case
for nonconvex Hamiltonians. For a recent use of the minmax selector
in weak KAM theory, see \cite{arnaud}.

The minmax has been defined using homology or cohomology with
various coefficient rings, for example $\Z$ in \cite{MC,Vi},  $\Q$
in \cite{Ca} and  $\Z_2$ in \cite{Pl}. Also, in \cite{Vi}, the
maxmin was mentioned as a natural analogue to the minmax.  But there
is no evidence showing that all these critical values coincide.
G.~Capitanio has given a proof  \cite{Ca} that the maxmin and minmax
for homology with coefficients in $\Q$ are equal, but the criterion
he uses (Proposition $2$ in \cite{Ca}) is not correct---see
Remark~\ref{rem42} hereafter.

In this note, we investigate the maxmin and minmax for a general
function quadratic at infinity, not necessarily related to
Hamilton-Jacobi equations. We give both algebraic and geometric
proofs that the minmax and maxmin with coefficients in a field
coincide; the geometric proof, based on Barannikov's Jordan normal
form for the boundary operator of the Morse complex, improves our
understanding of the problem.

A counterexample for coefficients in $\Z$\/, due to F.~Laudenbach,
is constructed using Morse homology; in this example, moreover, the
minmax-maxmin for  coefficients in $\Z_2$ is not the same as for
coefficients in $\Q$\/. However, if the minmax and maxmin for
coefficients in $\Z$ coincide, then all three minmax-maxmin critical
values are equal.

\section{Maxmin and Minmax}

\subsection*{Hypotheses and notation}
We denote by $X$ the vector space $\R^n$ and by $f$ a real function
on $X$\/, \emph{quadratic at infinity} in the sense that it is
continuous and there exists a nondegenerate quadratic form $Q:X
\to\R$ such that $f$ coincides with $Q$ outside a compact subset.

Let $f^c:=\{x| f(x)\leq c\}$ denote the sub-level sets of $f$\/.
Note that for $c$ large enough, the homotopy types of $f^c$,
$f^{-c}$ do not depend on $c$, we may denote them as $f^{\infty}$
and $f^{-\infty}$. Suppose the quadratic form $Q$ has Morse index
$\lambda$, then the homology groups with coefficient ring $R$ are
\[H_*(\sp,\sn; R)
\simeq \left\{
              \begin{array}{ll}
               R & \textrm{in dimension}\, \lambda \\
               0 &\textrm{otherwise}
              \end{array}
                        \right.
\]
Consider the  homomorphism of homology groups
\[i_{c*}: H_*(f^c,\sn;R)\to H_*(\sp,\sn;R)\]
induced by the inclusion $i_c: (f^c,\sn)\hookrightarrow
(\sp,\sn)$\/.

\begin{defn}\rm  If $\Xi$ is a generator of
$H_{\lambda}(\sp,\sn;R)$, we let
\[\m(f,R):=\inf\{c: \Xi\in \im (i_{c*})\}\/,\]
i.e. $\m(f,R)=\inf\{c: i_{c*}H_\lambda(f^c,\sn;R)=
H_{\lambda}(\sp,\sn;R)\}$\/.
\end{defn}

Similarly, we can consider the homology group
\[H_*(X\setminus\sn,X\setminus\sp;R)\simeq \left\{
              \begin{array}{ll}
               R, & \textrm{in dimension}\, n-\lambda \\
               0, &\textrm{otherwise}
              \end{array}
                        \right.\]
and the homomorphism
\[j_{c*}: H_*(X\setminus f^c,X\setminus \sp;R)\to
H_*(X\setminus\sn,X\setminus\sp;R)\] induced by $j_c:(X\setminus
f^c,X\setminus \sp)\hookrightarrow (X\setminus \sn,X\setminus \sp)$.

\begin{defn}\rm  If $\Delta$ is a generator of
$H_{n-\lambda}(X\setminus\sn,X\setminus\sp;R)$\/, we let
\begin{eqnarray*}
\s(f,R) & := & \sup\{c: \Delta\in \im (j_{c*})\} \\
 & = & \sup\{c: j_{c*}H_{n-\lambda}(X\setminus f^c,X\setminus\sp;R)=H_{n-\lambda}(X\setminus\sn,X\setminus\sp;R)\}.
\end{eqnarray*}
\end{defn}

\begin{lem}
\label{max} One has that
 \beaa \m(f,R)&=&\inf\max f:=\inf_{[\sigma]=
\Xi}\max_{x\in |\sigma|}f(x)\\
\s(f,R)&=&\sup\min f:=\sup_{[\sigma]=\Delta}\min_{x\in
|\sigma|}f(x),\eeaa where $\sigma$ is a relative cycle and
$|\sigma|$ denotes its support. We call $\sigma$ a descending (resp.
ascending) simplex if $[\sigma]=\Xi$ (resp. $[\sigma]=\Delta$).
\end{lem}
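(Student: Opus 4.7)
\medskip
\noindent
\textbf{Proof proposal.} The plan is to unwind both definitions directly, treating the two identities in parallel but noting a small topological asymmetry between sub-level sets and their complements. The only facts I will use are that a singular chain has compact support and that the pairs appearing are well-defined because $f$ is quadratic at infinity.

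First I would handle $\underline{\gamma}(f,R)=\inf\max f$. For the inequality $\underline{\gamma}(f,R)\le\inf\max f$, take any relative cycle $\sigma$ in $(f^\infty,f^{-\infty};R)$ representing $\Xi$ and set $c:=\max_{x\in|\sigma|}f(x)$, which is finite because $|\sigma|$ is compact. Since $|\sigma|\subset f^c$ and the boundary $\partial\sigma$ lies in $f^{-\infty}$, the chain $\sigma$ is already a relative cycle in $(f^c,f^{-\infty};R)$, and its image under $i_{c*}$ is $\Xi$ by construction. Hence $\Xi\in\im(i_{c*})$ and therefore $\underline{\gamma}(f,R)\le c$; taking the infimum over $\sigma$ gives the inequality. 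For the converse, pick any $c$ with $\Xi\in\im(i_{c*})$, choose a relative cycle $\tau$ in $(f^c,f^{-\infty};R)$ with $i_{c*}[\tau]=\Xi$, and view $\tau$ as a cycle in $(f^\infty,f^{-\infty};R)$ representing $\Xi$. Then $\max_{|\tau|}f\le c$, so $\inf\max f\le c$, and taking the infimum over such $c$ gives $\inf\max f\le\underline{\gamma}(f,R)$.

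The identity for $\overline{\gamma}(f,R)$ runs on the same lines but requires more care because the sets $X\setminus f^c=\{f>c\}$ are open while $f^c$ is closed. For $\overline{\gamma}(f,R)\ge\sup\min f$, given an ascending relative cycle $\sigma$ set $c:=\min_{x\in|\sigma|}f(x)$; then for every $c'<c$ we have $|\sigma|\subset\{f>c'\}=X\setminus f^{c'}$, so $\sigma$ represents a class in $H_{n-\lambda}(X\setminus f^{c'},X\setminus f^\infty;R)$ whose image under $j_{c'*}$ is $\Delta$. Thus $c'\le\overline{\gamma}(f,R)$ for every $c'<c$, whence $c\le\overline{\gamma}(f,R)$, and the supremum over $\sigma$ gives the inequality. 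For the reverse inequality, let $c$ be such that $\Delta\in\im(j_{c*})$ and pick a cycle $\tau$ in $(X\setminus f^c,X\setminus f^\infty;R)$ mapping to $\Delta$. Since $|\tau|$ is compact and $f>c$ on $|\tau|$, one has $\min_{|\tau|}f>c$, so $\sup\min f\ge\min_{|\tau|}f>c$; viewing $\tau$ as a cycle in $(X\setminus f^{-\infty},X\setminus f^\infty;R)$ it represents $\Delta$, so this $\tau$ is legitimate. Taking the supremum over admissible $c$ yields $\overline{\gamma}(f,R)\le\sup\min f$.

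I do not expect a serious obstacle; the whole argument is a definition-chase using compactness of chain supports. The only mildly delicate point is the strict versus non-strict inequality in the descending case for $\overline{\gamma}$, which is handled by the $c'<c$ approximation in one direction and by the open condition $f>c$ on the compact set $|\tau|$ in the other. No nontrivial algebraic topology is required beyond the naturality of the inclusion-induced maps $i_{c*}$ and $j_{c*}$.
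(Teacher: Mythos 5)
Your proof is correct and is essentially the paper's own argument: both directions are the same definition-chase, using that a representative of a class in $\im(i_{c*})$ (resp.\ $\im(j_{c*})$) is itself a descending (resp.\ ascending) simplex supported in the relevant set, and conversely that any such simplex certifies an admissible $c$. The only difference is that you spell out the strict-inequality bookkeeping for the open sets $X\setminus f^c$ in the $\s$ case, which the paper dismisses as ``identical''; your $c'<c$ approximation handles it correctly.
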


\begin{proof} A descending simplex $\sigma$ defines a homology class in $H_{\lambda}(f^c,f^{-\infty};R)$ if and only if $|\sigma|\subset f^c$\/, in which case one has $\displaystyle\max_{x\in |\sigma|}f(x)\leq c$\/, hence $\m(f,R)\geq\inf\max f$\/;  choosing $c=\displaystyle\max_{x\in |\sigma|}f(x)$\/, we get equality. The case of $\s$ is identical.
\end{proof}

\begin{defn}\rm  $\m(f,R)$ is called a minmax of $f$ and $\s(f,R)$\/, a
maxmin .
\end{defn}
\begin{rem}
As we shall see later, in view of Morse homology, the names are
proper generically for Morse-excellent functions.
\end{rem}
One can also consider cohomology instead of homology and define
\beaa
\underline{\alpha}(f,R)&:=&\inf\{c: i_c^*\neq 0\}\/,\quad i_c^*:H^\lambda(f^\infty,f^{-\infty};R)\rightarrow H^\lambda(f^c,f^{-\infty};R)%( =\sup \{c: i_c^*\tilde{\Xi}=0\} )
\\
\overline{\alpha}(f,R)&:=&\sup\{c: j_c^*\neq 0\}\/,\quad
j_c^*:H^{n-\lambda}(X\setminus f^{-\infty},X\setminus
f^\infty;R)\rightarrow H^{n-\lambda}(X\setminus f^c,X\setminus
f^\infty;R).
%=\inf\{c: j_c^*\tilde{\Delta}= 0\}
\eeaa

\begin{prop}[\cite{Vi}, Proposition 2.4]
\label{Vi} When $X$ is $R$-oriented,
\[
\overline{\alpha}(f,R)=\m(f,R)\quad\hbox{and}\quad\underline{\alpha}(f,R)=\s
(f,R).
\]
\end{prop}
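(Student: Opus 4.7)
The plan is to derive both identities from Alexander--Lefschetz duality for the $R$-oriented manifold $X=\R^n$, combined with the long exact sequence of an appropriate triple. I will describe the argument for $\overline{\alpha}(f,R)=\m(f,R)$; the identity $\underline{\alpha}(f,R)=\s(f,R)$ is entirely parallel, with the roles of homology and cohomology exchanged.

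Since $f$ coincides with the nondegenerate quadratic form $Q$ outside a compact set, the pairs $(f^c,\sn)$ and their complements $(X\setminus\sn,X\setminus f^c)$ may be replaced by homotopy-equivalent finite CW pairs (supported in a sufficiently large closed ball), and Alexander--Lefschetz duality yields natural isomorphisms
\[
D_c\colon H_\lambda(f^c,\sn;R)\xrightarrow{\;\cong\;}H^{n-\lambda}(X\setminus\sn,X\setminus f^c;R),
\]
and likewise for $c=+\infty$. Naturality converts the inclusion-induced $i_{c*}$ into an inclusion-induced map $I_c^*$ on cohomology going in the same direction: the square
\[
\begin{CD}
H_\lambda(f^c,\sn;R) @>{i_{c*}}>> H_\lambda(\sp,\sn;R) \\
@V{D_c}V{\cong}V @V{D_\infty}V{\cong}V \\
H^{n-\lambda}(X\setminus\sn,X\setminus f^c;R) @>{I_c^*}>> H^{n-\lambda}(X\setminus\sn,X\setminus\sp;R)
\end{CD}
\]
commutes, where $I_c^*$ is induced by the pair inclusion $(X\setminus\sn,X\setminus\sp)\hookrightarrow(X\setminus\sn,X\setminus f^c)$. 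Since $D_\infty(\Xi)$ generates the bottom-right $R$-module $\cong R$, the condition $\Xi\in\im(i_{c*})$ is equivalent to $I_c^*$ being surjective.

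Next I invoke the triple $X\setminus\sp\subset X\setminus f^c\subset X\setminus\sn$, whose cohomology long exact sequence reads in degree $n-\lambda$
\[
H^{n-\lambda}(X\setminus\sn,X\setminus f^c)\xrightarrow{\;I_c^*\;}H^{n-\lambda}(X\setminus\sn,X\setminus\sp)\xrightarrow{\;j_c^*\;}H^{n-\lambda}(X\setminus f^c,X\setminus\sp),
\]
so $\ker j_c^*=\im I_c^*$; since the middle term is $\cong R$, $I_c^*$ is surjective iff $j_c^*=0$. Chaining the equivalences gives $\Xi\in\im(i_{c*})\Leftrightarrow j_c^*=0$. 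A factorization argument (for $c<c'$, $j_{c'}^*$ factors through $j_c^*$, hence $j_c^*=0\Rightarrow j_{c'}^*=0$) shows $\{c:j_c^*=0\}$ is an up-set and its complement a down-set, so they share a common boundary point, whence
\[
\m(f,R)=\inf\{c:j_c^*=0\}=\sup\{c:j_c^*\neq 0\}=\overline{\alpha}(f,R).
\]

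The identity $\underline{\alpha}(f,R)=\s(f,R)$ follows from the same template, using the dual Alexander--Lefschetz isomorphism $H^\lambda(f^c,\sn;R)\cong H_{n-\lambda}(X\setminus\sn,X\setminus f^c;R)$ and the analogous triple long exact sequence in homology; this identifies the up-set $\{c:i_c^*\neq 0\}$ with the complement of the down-set $\{c:\Delta\in\im(j_{c*})\}$, and the same monotonicity argument closes the case. The main technical subtlety lies in the duality isomorphism itself: one must verify that Alexander--Lefschetz duality applies naturally in $c$ despite the non-compactness of the sub-level sets $f^c$. Restricting the entire picture to a closed ball large enough that $f=Q$ on its complement reduces everything to compact CW pairs, on which classical duality is standard; the remainder of the proof is a formal diagram chase.
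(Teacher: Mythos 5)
Your proof is correct and follows essentially the same route as the paper: Alexander duality to transport the question to the complements, then exactness of the long exact sequence of the triple $X\setminus\sp\subset X\setminus f^c\subset X\setminus\sn$ to conclude that $i_{c*}$ is onto if and only if $j_c^*=0$ (the paper packages this as a commutative diagram with two exact columns linked by the duality isomorphisms). Your additional remarks---the explicit monotonicity/up-set argument identifying $\inf\{c:j_c^*=0\}$ with $\sup\{c:j_c^*\neq 0\}$, and the reduction to compact pairs inside a large ball---only make explicit steps the paper leaves implicit.
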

\begin{proof}
We establish for example the first identity: one has the commutative
diagram
\[
\begin{array}{ccc}
H_\lambda(f^c,f^{-\infty};R) & \simeq & H^{n-\lambda}(X\setminus f^{-\infty},X\setminus f^c;R) \\
\downarrow^{i_{c*}} &  & \downarrow \\
H_\lambda(f^\infty,f^{-\infty};R) & \simeq & H^{n-\lambda}(X\setminus f^{-\infty},X\setminus f^\infty;R) \\
\downarrow &  & \downarrow^{j_c^*} \\
H_\lambda(f^\infty,f^c;R) & \simeq & H^{n-\lambda}(X\setminus
f^c,X\setminus f^\infty;R)
\end{array}
\]
where the horizontal isomorphisms are given by Alexander duality
(\cite{H}, section~3.3) and the columns are exact. It does follow
that $i_{c*}$ is onto if and only if $j_c^*$ is zero.
\end{proof}

\begin{defn}
\rm As long as $X$ is finite dimensional, the \emph{Clarke
generalized derivative} of a locally Lipschitzian function $f:X\to
\R$ can be defined as follows:
\[
\p f(x):= \conv\{\lim_{x'\to x} df(x'), \; x'\in \dom(df)\}\/;
\]
where $\conv$ denotes the convex envelop. A point $x\in X$ is called
a \emph{critical point} of $f$ if $0\in \p f(x)$.
\end{defn}

\begin{prop}
If $f$ is $C^2$ then $\m(f,R)$ and $\s(f,R)$ are critical values of
$f$\/; they are critical values of $f$ in the sense of Clarke when
$f$ is locally Lipschitzian.
\end{prop}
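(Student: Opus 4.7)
The plan is to run a standard deformation-lemma / minmax argument for $\m(f,R)$ and then deduce the statement for $\s(f,R)$ by symmetry, since $\s(f,R)=-\m(-f,R)$ and $-f$ is again quadratic at infinity (with Morse index $n-\lambda$). Alternatively, one may repeat the argument after reversing arrows, working with the pairs $(X\setminus f^c, X\setminus\sp)$ and the ascending flow.

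First I would assume for contradiction that $c:=\m(f,R)$ is a regular value of $f$. Because $f$ is quadratic at infinity, its critical set is compact, hence there exists $\epsilon>0$ such that $[c-\epsilon,c+\epsilon]$ contains no critical value of $f$. A truncation of the negative gradient flow, patched with the flow of $-\nabla Q$ outside a large ball, yields a complete flow whose time-one map $\varphi:X\to X$ is homotopic to the identity, leaves $\sn$ invariant, and sends $f^{c+\epsilon}$ into $f^{c-\epsilon}$. By Lemma~\ref{max}, there is a relative cycle $\sigma$ representing $\Xi$ with $|\sigma|\subset f^{c+\epsilon}$. The cycle $\varphi_\#(\sigma)$ still represents $\Xi$ (since $\varphi$ is homotopic to the identity relative to $\sn$) and its support lies in $f^{c-\epsilon}$, so applying Lemma~\ref{max} once more gives $\m(f,R)\le c-\epsilon$, contradicting $\m(f,R)=c$.

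For locally Lipschitzian $f$, the only ingredient that must be replaced is the smooth gradient. Suppose $0\notin\p f(x)$ at every $x\in f^{-1}(c)$. By upper semi-continuity of $x\mapsto\p f(x)$ together with compactness of that level set, one can cover a neighbourhood of $f^{-1}[c-\epsilon,c+\epsilon]$ by open sets on each of which a locally constant direction separates $\p f$ from zero, and glue the local choices by a partition of unity (this is Chang's construction in nonsmooth critical point theory). The result is a locally Lipschitz pseudogradient vector field $v$ satisfying $\l\xi,v(x)\r\ge\alpha>0$ for every $\xi\in\p f(x)$; patching $-v$ with $-\nabla Q$ outside a large ball yields the required deformation, after which the argument above proceeds verbatim.

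The main obstacle is precisely this nonsmooth deformation step: constructing a locally Lipschitz pseudogradient whose pairing with every element of $\p f$ is uniformly positive is the delicate point, because $\p f$ is only upper semi-continuous and set-valued. Once such a pseudogradient is available, everything reduces to the familiar minmax machinery, and the symmetric argument for $\s(f,R)$ requires no new idea.
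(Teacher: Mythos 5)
Your argument is correct and is essentially the paper's own proof: if $c=\m(f,R)$ were regular, the (truncated, normalized) negative gradient flow deforms $f^{c+\epsilon}$ into $f^{c-\epsilon}$, contradicting the minimality in Lemma~\ref{max}, with a pseudogradient replacing $\nabla f$ in the locally Lipschitzian case and the $\s$ case handled symmetrically. The extra detail you supply (compactness of the critical set, Chang's construction of the nonsmooth pseudogradient) only fleshes out what the paper leaves implicit.
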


\begin{proof}
Take $\m$ for example: if $c=\m(f,R)$ is not a critical value then,
for
 small $\epsilon>0$,  $f^{c-\epsilon}$ is a deformation retract of $f^{c+\epsilon}$ via the flow of $-\frac{\nabla f}{\|\nabla f\|^2}$, hence $\m(f,R)\leq c-\epsilon$, a contradiction. The same argument applies when $f$ is only locally Lipschitzian, replacing $\nabla f$ by a pseudo-gradient.
\end{proof}

\begin{lem}
\label{lem29} If $f$ is locally Lipschitzian, then
\[
\s(f,R)=-\m(-f,R)
\]
\end{lem}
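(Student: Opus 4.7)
The plan is to reduce the identity to the tautological equality $(-f)^c=\{f\geq -c\}$ by identifying the ``descending pair'' $\bigl((-f)^c,(-f)^{-\infty}\bigr)$ attached to $-f$ with the ``ascending pair'' $\bigl(X\setminus f^{-c},\,X\setminus f^{\infty}\bigr)$ attached to $f$. Since $-f$ is itself quadratic at infinity with quadratic form $-Q$ of Morse index $n-\lambda$, both pairs have nontrivial relative homology only in degree $n-\lambda$, and the generator $\Xi_{-f}$ for $-f$ will match the generator $\Delta$ for $f$ up to sign.

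First I would check that for any regular value $a$ of $f$ the inclusion $\{f>a\}\hookrightarrow\{f\geq a\}$ is a homotopy equivalence: a pseudo-gradient of $f$ (which exists since $f$ is locally Lipschitzian) applied on a critical-point-free slab $\{a-\varepsilon\leq f\leq a+\varepsilon\}$ provides an explicit deformation retract. Because $f$ is quadratic at infinity, its critical values are bounded; hence regular values are dense in $\R$ and cofinal at $\pm\infty$. Applying this equivalence at $a=-c$ and at the large regular values used to define $f^{\pm\infty}$ yields a homotopy equivalence of triples
\[
 \bigl((-f)^{\infty},\,(-f)^c,\,(-f)^{-\infty}\bigr) \;\simeq\; \bigl(X\setminus f^{-\infty},\,X\setminus f^{-c},\,X\setminus f^{\infty}\bigr),
\]
under which the inclusion $\bigl((-f)^c,(-f)^{-\infty}\bigr)\hookrightarrow\bigl((-f)^{\infty},(-f)^{-\infty}\bigr)$ corresponds to the map $j_{-c}$ from the excerpt, and $\Xi_{-f}$ corresponds to $\pm\Delta$.

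It then follows that $\Xi_{-f}\in\im(i^{-f}_{c*})$ precisely when $\Delta\in\im(j_{-c,*})$, which in turn holds precisely when $-c\leq\s(f,R)$, since $\{d:\Delta\in\im(j_{d,*})\}$ is downward-closed with supremum $\s(f,R)$ (the image can only shrink as $d$ grows and $X\setminus f^d$ shrinks with it). Consequently
\[
 \m(-f,R)=\inf\{c:\Xi_{-f}\in\im(i^{-f}_{c*})\}=\inf\{c:c\geq -\s(f,R)\}=-\s(f,R),
\]
which is the claimed identity. The principal technical point is the homotopy identification of pairs at regular values; the inf/sup defining $\m$ and $\s$ are insensitive to restricting $c$ to the dense set of regular values, so non-regular levels require no separate treatment.
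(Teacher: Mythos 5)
Your proposal is correct and follows essentially the same route as the paper: a (pseudo-)gradient flow identifies, up to homotopy, the sublevel pairs of $-f$ with the complements of sublevel pairs of $f$ at non-critical levels, and the possibly critical boundary levels are handled by approximating with regular values (the paper does this with sequences $c_n\nearrow\s(f,R)$ and $c_n'\searrow\m(-f,R)$, you do it via density of regular values and monotonicity of the images). The only caveat, shared with the paper's own proof, is that the existence of non-critical values arbitrarily close to the relevant level is asserted rather than proved for merely locally Lipschitzian $f$.
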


\begin{proof}
Using a (pseudo-)gradient of $f$ as previously, one can see that
$X\setminus f^c$ and $(-f)^{-c}$ have the same homotopy type when
$c$ is not a critical value of $f$. Otherwise, choose a sequence of
non-critical values $c_n\nearrow c=\s(f,R)$, then $-c_n\geq
\m(-f,R)$, taking the limit, we have $\s(f,R)\leq -\m(-f,R)$.
Similarly, taking $c_n'\searrow \m(-f,R)$, then $-c_n'\leq \s(f,R)$,
from which the limit gives us the inverse inequality $-\m(-f,R)\leq
\s(f,R)$.
\end{proof}

The following two questions  arise naturally: \\
(1) Do we have $\m(f,R)=\s(f,R)$?\\
(2) Do $\m(f,R)$ and $\s(f,R)$ depend on the coefficient ring $R$?

Here are two obvious elements for an answer:

\begin{prop}
\label{prop210} One has $\m(f,\Z)\geq\s(f,\Z)$\/.
\end{prop}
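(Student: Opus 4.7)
The plan is to argue by contradiction, combining Proposition~\ref{Vi} with the universal coefficient theorem and naturality of the Kronecker pairing. Suppose $\m(f,\Z)<\s(f,\Z)$ and fix $c$ strictly between them. Since $c>\m(f,\Z)$, the generator $\Xi$ lies in the image of $i_{c*}$, and I can pick $\sigma\in H_\lambda(f^c,\sn;\Z)$ with $i_{c*}\sigma=\Xi$. On the other hand, Proposition~\ref{Vi} identifies $\s(f,\Z)$ with $\underline{\alpha}(f,\Z)$, so $c<\s(f,\Z)$ forces $i_c^*:H^\lambda(\sp,\sn;\Z)\to H^\lambda(f^c,\sn;\Z)$ to vanish identically.

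The contradiction will come from a Kronecker-pairing computation. Because $H_\lambda(\sp,\sn;\Z)\simeq\Z$ and $H_{\lambda-1}(\sp,\sn;\Z)=0$, the universal coefficient theorem identifies $H^\lambda(\sp,\sn;\Z)$ with $\mathrm{Hom}(\Z,\Z)\simeq\Z$. Let $\phi$ be a generator; viewed as an element of $\mathrm{Hom}(\Z,\Z)$, it pairs with $\Xi$ to give $\pm 1$. Naturality of the Kronecker pairing then yields
\[
\langle i_c^*\phi,\sigma\rangle=\langle\phi,i_{c*}\sigma\rangle=\langle\phi,\Xi\rangle=\pm 1,
\]
so $i_c^*\phi\neq 0$, contradicting the previous paragraph.

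The main (quite minor) obstacle is the bookkeeping justifying that the intermediate value $c$ really produces both a lift of $\Xi$ and the vanishing of $i_c^*$: one must know that $\{c':\Xi\in\mathrm{im}(i_{c'*})\}$ is upward-closed in $c'$ and that $\{c':i_{c'}^*=0\}$ is downward-closed. Both follow immediately from the factorisations $i_{c*}=i_{c'*}\circ\iota_*$ and $i_c^*=\iota^*\circ i_{c'}^*$ coming from the inclusion $\iota:f^c\hookrightarrow f^{c'}$ for $c\leq c'$. With these monotonicity facts in hand the argument is essentially one line, in keeping with the ``obvious'' character of the inequality.
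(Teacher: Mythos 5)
Your argument is correct, but it takes a genuinely different route from the paper's. The paper proves the inequality directly in homology: since the intersection number of $\Xi$ and $\Delta$ is $\pm1$, the supports of any descending simplex $\sigma$ and any ascending simplex $\tau$ must meet at some point $\bar x$, so $\max_{x\in|\sigma|}f(x)\geq f(\bar x)\geq\min_{x\in|\tau|}f(x)$, and $\m(f,\Z)\geq\s(f,\Z)$ follows from Lemma~\ref{max} by taking the infimum over $\sigma$ and the supremum over $\tau$. You instead dualize: unwinding your contradiction, what you actually show is that $i_{c*}$ onto implies $i_c^*\neq0$ (surjectivity in homology forces the cohomological restriction to be nonzero, via the universal coefficient theorem and naturality of the Kronecker pairing), i.e. $\{c:\Xi\in\im(i_{c*})\}\subseteq\{c:i_c^*\neq0\}$, whence $\m(f,\Z)\geq\underline{\alpha}(f,\Z)=\s(f,\Z)$ by Proposition~\ref{Vi}. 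This is exactly the half of the field-coefficient argument of Theorem~\ref{sm} that survives over $\Z$: over a field ``onto'' and ``transpose nonzero'' are equivalent, while over $\Z$ only the forward implication holds, which is precisely the room in which the Laudenbach example of Section~\ref{sec4} lives. The paper's proof buys a concrete point $\bar x$ pinched between the max and the min, in the spirit of Lemma~\ref{max}; yours buys an algebraic explanation of why only one inequality persists integrally and exhibits Proposition~\ref{prop210} as a corollary of Proposition~\ref{Vi} (note that Alexander duality, hence orientability of $X=\R^n$ over $\Z$, enters through that proposition, playing the role the intersection number plays in the paper). Your monotonicity bookkeeping is fine, and for $i_c^*$ you need even less: $c<\inf\{c':i_{c'}^*\neq0\}$ already gives $i_c^*=0$ without invoking downward-closedness.
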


\begin{proof}
As the intersection number of $\Xi$ and $\Delta$ is $\pm1$\/, the
support of any descending simplex $\sigma$ must intersect the
support of any ascending simplex $\tau$ at some point $\bar x$\/,
hence $\displaystyle\max_{x\in |\sigma|}f(x)\geq f(\bar
x)\geq\min_{x\in |\tau|}f(x)$\/.
\end{proof}

\begin{prop}
\label{prop211} One has $\m(f,\Z)\geq\m(f,R)$ and
$\s(f,\Z)\leq\s(f,R)$ for every ring $R$\/.
\end{prop}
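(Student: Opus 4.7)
The plan is to exploit naturality of homology under the change-of-coefficients map induced by the ring homomorphism $\varepsilon\colon\Z\to R$, $1\mapsto 1_R$, together with the cycle-level description provided by Lemma~\ref{max}. On singular chains, $\varepsilon$ induces a chain map $\phi$ sending $\sum n_i\sigma_i$ to $\sum\varepsilon(n_i)\sigma_i$; by construction $|\phi(\sigma)|\subset|\sigma|$, and $\phi$ is natural with respect to the inclusions $i_c$ and $j_c$.

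First I would verify that $\phi_*\colon H_\lambda(\sp,\sn;\Z)\to H_\lambda(\sp,\sn;R)$ sends the generator $\Xi$ to a generator $\Xi_R$. Since $H_\lambda(\sp,\sn;\Z)\simeq\Z$ is free and $H_{\lambda-1}(\sp,\sn;\Z)=0$, the universal coefficient theorem identifies $\phi_*$ with $\varepsilon\colon\Z\to R$ itself, and so it carries the generator $1$ to the generator $1_R$. The analogous statement for the pair $(X\setminus\sn,X\setminus\sp)$ in degree $n-\lambda$ follows the same way.

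Next, to obtain the inequality on $\m$: every integer descending simplex $\sigma$ yields, by step one, an $R$-descending simplex $\phi(\sigma)$, with $|\phi(\sigma)|\subset|\sigma|$, hence $\max_{|\phi(\sigma)|}f\leq\max_{|\sigma|}f$. Taking the infimum over all such $\sigma$ and invoking Lemma~\ref{max} gives $\m(f,R)\leq\m(f,\Z)$. The argument for $\s$ is identical after replacing $\max$ by $\min$ and $\inf$ by $\sup$, and using ascending simplices for the pair $(X\setminus\sn,X\setminus\sp)$.

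The only real obstacle is the identification in step one; this is where the universal coefficient theorem, together with freeness of $H_\lambda$ and vanishing of $H_{\lambda-1}$, is essential. Once this is in place, the rest is a mechanical transfer along a natural transformation.
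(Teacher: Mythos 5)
Your argument is correct and is essentially the paper's own proof: the paper likewise passes from an integral descending (resp.\ ascending) simplex to an $R$-descending (resp.\ ascending) one via the change of coefficients and concludes with Lemma~\ref{max}; you have merely supplied the details (the map $\phi$ on chains, $|\phi(\sigma)|\subset|\sigma|$, and the universal coefficient argument showing the generator maps to a generator) that the paper leaves implicit. No gap; nothing further is needed.
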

\begin{proof}
A simplex $\sigma$ whose homology class generates
$H_\lambda(\sp,\sn;\Z)$ induces a simplex whose homology class
generates $H_\lambda(\sp,\sn;R)$\/, hence the first inequality and,
mutatis mutandis, the second one.
\end{proof}

\begin{thm}
\label{sm} If $\mathbb{F}$ is a field, then
$\m(f,\mathbb{F})=\s(f,\mathbb{F})$.
\end{thm}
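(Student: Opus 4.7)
The plan is to combine Viterbo's Proposition~\ref{Vi} with the fact that, over a field, cohomology is the $\mathbb{F}$-linear dual of homology. Since $X=\R^n$ is orientable over any ring, Proposition~\ref{Vi} applies and gives $\m(f,\mathbb{F})=\overline{\alpha}(f,\mathbb{F})$; the theorem therefore reduces to the identity $\overline{\alpha}(f,\mathbb{F})=\s(f,\mathbb{F})$. I will in fact prove the sharper set-equality
\[
\{c:j_c^*\ne 0\}=\{c:\Delta\in\im(j_{c*})\}
\]
for every $c\in\R$, which trivially implies equality of the suprema.

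To establish this, fix $c$. Because $\mathbb{F}$ is a field, the universal coefficient theorem provides, for every pair $Y$, a natural isomorphism $H^k(Y;\mathbb{F})\simeq\mathrm{Hom}_{\mathbb{F}}(H_k(Y;\mathbb{F}),\mathbb{F})$ (the $\mathrm{Ext}$-term vanishes). Applied to the inclusion of pairs defining $j_c$, it identifies $j_c^*$ with the $\mathbb{F}$-linear transpose of $j_{c*}$. Now the target $H_{n-\lambda}(X\setminus\sn,X\setminus\sp;\mathbb{F})\simeq\mathbb{F}$ of $j_{c*}$ is one-dimensional, generated by $\Delta$; hence $\Delta\in\im(j_{c*})$ is equivalent to $j_{c*}$ being surjective. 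Dually, the source $H^{n-\lambda}(X\setminus\sn,X\setminus\sp;\mathbb{F})\simeq\mathbb{F}$ of $j_c^*$ is one-dimensional, so $j_c^*\ne 0$ is equivalent to $j_c^*$ being injective. The standard equivalence ``a linear map has injective transpose iff it is surjective''---which remains valid even though the source of $j_{c*}$ may be infinite-dimensional, because its target is $1$-dimensional---closes the chain: $j_c^*\ne 0\iff j_{c*}\text{ surjective}\iff \Delta\in\im(j_{c*})$.

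Taking suprema yields $\overline{\alpha}(f,\mathbb{F})=\s(f,\mathbb{F})$, and combined with $\m(f,\mathbb{F})=\overline{\alpha}(f,\mathbb{F})$ this proves the theorem. There is no serious obstacle: the argument is pure duality-bookkeeping once Proposition~\ref{Vi} and the universal coefficient theorem are in hand. The one point deserving care is the naturality of the $\mathrm{Hom}$-pairing with respect to inclusions of pairs, which is exactly what makes $j_c^*$ the literal transpose of $j_{c*}$; this is entirely classical.
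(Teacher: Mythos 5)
Your proof is correct and essentially the same as the paper's: both combine Proposition~\ref{Vi} with the field-duality observation that the cohomology map is the transpose of the homology map, so that surjectivity onto a one-dimensional $\mathbb{F}$-vector space is equivalent to nonvanishing of the transpose. The only cosmetic difference is that you run the transpose argument on the $j_c$ (maxmin) side to show $\overline{\alpha}(f,\mathbb{F})=\s(f,\mathbb{F})$, whereas the paper runs it on the $i_c$ (minmax) side to show $\m(f,\mathbb{F})=\underline{\alpha}(f,\mathbb{F})$, invoking Proposition~\ref{Vi} for the other identity.
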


\begin{proof} By Proposition~\ref{Vi}, it is enough to prove that
\[
\m(f,\mathbb{F})=\underline{\alpha}(f,\mathbb{F})
\]
Recall that $\m(f,\mathbb{F})$ (resp.
$\underline{\alpha}(f,\mathbb{F})$\/) is the infimum of the real
numbers $c$ such that $i_{c*}: H_\lambda (f^c,\sn;\mathbb{F})\to
H_\lambda (\sp,\sn;\mathbb{F})$ is onto (resp. such that $i_c^*:
H^\lambda (\sp,\sn;\mathbb{F})\to H^\lambda (f^c,\sn;\mathbb{F})$ is
nonzero). Now, as $H_\lambda (\sp,\sn;\mathbb{F})$ is a
one-dimensional vector space over $\mathbb{F}$\/, the linear map
$i_{c*}$ is onto if and only if it is nonzero, i.e. if and only if
the transposed map $i_c^*$ is nonzero.\end{proof}

\subsection*{Remark}
This proof is invalid for coefficients in $\Z$ since a $\Z$-linear
map to $\Z$\/, for example $\Z\ni m\to km$\/, $k\in\Z$\/, $k>1$\/,
can be nonzero without being onto; we shall see in
Section~\ref{sec4} that Theorem~\ref{sm} itself is not true in that
case.

\begin{cor}
\label{cor213}
 If $\m(f,\Z)=\s(f,\Z)=\gamma $ then $\m(f,\mathbb{F})=\s(f,\mathbb{F})=\gamma $ for every field $\mathbb{F}$\/.
\end{cor}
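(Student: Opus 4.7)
The plan is a short sandwich argument: we use Proposition~\ref{prop211} to squeeze both $\m(f,\mathbb{F})$ and $\s(f,\mathbb{F})$ between the two equal numbers $\s(f,\Z)$ and $\m(f,\Z)$, and then invoke Theorem~\ref{sm} to collapse the sandwich. No new geometric input is needed; the corollary is a formal consequence of the two preceding results.

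Concretely, the first step is to apply Proposition~\ref{prop211} with $R=\mathbb{F}$. This gives the two monotonicity inequalities
\[
\m(f,\mathbb{F})\ \leq\ \m(f,\Z)\qquad\text{and}\qquad \s(f,\mathbb{F})\ \geq\ \s(f,\Z).
\]
Combining them with the hypothesis $\m(f,\Z)=\s(f,\Z)=\gamma$, one obtains the chain
\[
\m(f,\mathbb{F})\ \leq\ \gamma\ \leq\ \s(f,\mathbb{F}).
\]
The second step is to apply Theorem~\ref{sm}: since $\mathbb{F}$ is a field, $\m(f,\mathbb{F})=\s(f,\mathbb{F})$. Inserting this equality into the chain forces $\m(f,\mathbb{F})=\s(f,\mathbb{F})=\gamma$, which is the desired conclusion.

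There is essentially no obstacle: the only conceptual point is to notice that Proposition~\ref{prop211} puts $\m(f,\Z)$ on the \emph{upper} side and $\s(f,\Z)$ on the \emph{lower} side of the respective field quantities, so that when $\m(f,\Z)$ and $\s(f,\Z)$ are both equal to $\gamma$ the inequalities become a pinching. Theorem~\ref{sm} then closes the gap. The remark following Theorem~\ref{sm} (about the failure of the field argument over $\Z$) explains why the converse direction—and the analogue without the hypothesis $\m(f,\Z)=\s(f,\Z)$—cannot be deduced in this way, which is consistent with Laudenbach's counterexample mentioned in the introduction.
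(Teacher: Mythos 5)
Your argument is correct and is exactly the paper's proof: the paper also deduces the corollary immediately from Proposition~\ref{prop211} (applied with $R=\mathbb{F}$) and Theorem~\ref{sm}, the only content being the pinching $\m(f,\mathbb{F})\leq\gamma\leq\s(f,\mathbb{F})$ that you spell out. Nothing is missing.
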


\begin{proof}
This follows at once from Theorem~\ref{sm} and
Proposition~\ref{prop211}.
\end{proof}

\begin{cor} \label{da}Let $\gamma\in \R$ have the following property: there exist both a descending simplex over $\Z$ along
which $\gamma$ is the maximum of $f$ and an ascending simplex over
$\Z$ along which $\gamma$ is the minimum of $f$\/. Then,
$\m(f,\Z)=\s(f,\Z)=\m(f,\mathbb{F})=\s(f,\mathbb{F})=\gamma $ for
every field $\mathbb{F}$\/.
\end{cor}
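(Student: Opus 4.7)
The plan is to reduce this corollary to Corollary~\ref{cor213} via a simple sandwich argument over $\Z$, using the variational characterization of $\m$ and $\s$ from Lemma~\ref{max} together with the comparison inequality of Proposition~\ref{prop210}.

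First, I would apply Lemma~\ref{max} to the two given simplices. Let $\sigma$ be the descending simplex over $\Z$ with $\max_{x\in|\sigma|} f(x) = \gamma$; since $[\sigma] = \Xi$, the identity $\m(f,\Z) = \inf_{[\sigma']=\Xi}\max_{x\in|\sigma'|} f(x)$ gives $\m(f,\Z) \le \gamma$. Dually, the ascending simplex $\tau$ over $\Z$ with $\min_{x\in|\tau|} f(x) = \gamma$ realizes $[\tau] = \Delta$, so $\s(f,\Z) = \sup_{[\tau']=\Delta}\min_{x\in|\tau'|} f(x) \ge \gamma$.

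Next, I would invoke Proposition~\ref{prop210}, which gives $\m(f,\Z) \ge \s(f,\Z)$. Combining the three inequalities sandwiches everything at $\gamma$:
\[
\gamma \;\ge\; \m(f,\Z) \;\ge\; \s(f,\Z) \;\ge\; \gamma,
\]
forcing $\m(f,\Z) = \s(f,\Z) = \gamma$.

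Finally, Corollary~\ref{cor213} applies verbatim to this equality over $\Z$, yielding $\m(f,\mathbb{F}) = \s(f,\mathbb{F}) = \gamma$ for every field $\mathbb{F}$. There is no genuine obstacle here; everything is already packaged in the preceding results, and the only slight subtlety is noticing that the hypothesis is deliberately phrased to produce \emph{simultaneously} an upper bound for $\m(f,\Z)$ and a lower bound for $\s(f,\Z)$ at the same value $\gamma$, which is exactly what is needed to collapse the interval predicted by Proposition~\ref{prop210}.
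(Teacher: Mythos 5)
Your proof is correct and coincides with the paper's own argument: Lemma~\ref{max} gives $\m(f,\Z)\leq\gamma\leq\s(f,\Z)$, Proposition~\ref{prop210} closes the sandwich, and Corollary~\ref{cor213} transfers the common value to every field $\mathbb{F}$. Nothing to add.
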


\begin{proof}
We have $\m(f;\Z)\leq \gamma\leq \s(f;\Z)$ by Lemma~\ref{max} and
$\s(f;\Z)\leq \m(f;\Z)$ by Proposition~\ref{prop210}, hence our
result by Corollary~\ref{cor213}.
\end{proof}

\section{Morse complexes and the Barannikov normal form}

The previous proof of Theorem~\ref{sm}, though simple, is quite
algebraic. We now give a more geometric proof, which we find more
concrete and illuminating, based on Barannikov's  canonical form of
Morse complexes. It will provide a good setting for the
counterexample in Section~\ref{sec4}.

First, there is a continuity result for the minmax and maxmin:

\begin{prop} \label{st}
If $f$ and $g$ are two continuous functions quadratic at infinity
with the same reference quadratic form, then \beaa
|\m(f,R)-\m(g,R)|&\leq& |f-g|_{C^0}\\
|\s(f,R)-\s(f,R)|&\leq& |f-g|_{C^0}. \eeaa
\end{prop}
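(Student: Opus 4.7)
Set $\epsilon:=|f-g|_{C^0}$; since $f-g$ is continuous and vanishes outside a compact set $K$, one has $\epsilon<\infty$. The plan is to exploit the pointwise bound $|f-g|\le\epsilon$ to obtain inclusions between the sublevel sets of $f$ and $g$ and then transport the generators $\Xi$ and $\Delta$ across them using functoriality of relative homology.

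First I would choose $c_0\ll 0$ below the minima of $f,g,Q$ on $K$ and $c_1\gg 0$ above the corresponding maxima, so that
\[
f^{c_0}=g^{c_0}=\{Q\le c_0\},\qquad f^{c_1}=g^{c_1}=\{Q\le c_1\}\cup K
\]
as actual subsets of $X$. The pairs $(f^{c_1},f^{c_0})$ and $(g^{c_1},g^{c_0})$ then coincide on the nose and realize $H_*(\sp,\sn;R)$ for both $f$ and $g$, so I may pick a common generator $\Xi$; similarly the complementary pairs $(X\setminus f^{c_0},X\setminus f^{c_1})$ and $(X\setminus g^{c_0},X\setminus g^{c_1})$ coincide, yielding a common $\Delta$.

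The bound $g\le f+\epsilon$ yields the inclusion $g^c\subset f^{c+\epsilon}$ for every $c\in\R$. To show $\m(f,R)\le\m(g,R)+\epsilon$, I fix $c>\m(g,R)$ and a class $\alpha\in H_\lambda(g^c,g^{c_0};R)$ whose image in $H_\lambda(g^{c_1},g^{c_0};R)$ equals $\Xi$. The sequence of inclusions
\[
(g^c,g^{c_0})\hookrightarrow (f^{c+\epsilon},f^{c_0})\hookrightarrow (f^{c_1},f^{c_0})=(g^{c_1},g^{c_0})
\]
coincides with the direct inclusion $(g^c,g^{c_0})\hookrightarrow(g^{c_1},g^{c_0})$, so by functoriality the image of $\alpha$ in $H_\lambda(f^{c+\epsilon},f^{c_0};R)$ also maps to $\Xi$, giving $\m(f,R)\le c+\epsilon$. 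Letting $c\searrow\m(g,R)$ and then swapping the roles of $f$ and $g$ produces $|\m(f,R)-\m(g,R)|\le\epsilon$.

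The argument for $\s$ is entirely symmetric, using the complementary inclusion $X\setminus g^c\subset X\setminus f^{c-\epsilon}$ (coming from $\{g>c\}\subset\{f>c-\epsilon\}$) and the generator $\Delta$ in place of $\Xi$; one obtains $\s(f,R)\ge\s(g,R)-\epsilon$ and hence $|\s(f,R)-\s(g,R)|\le\epsilon$. The only real piece of care is the on-the-nose identification of the generators for $f$ and $g$ via the strict equalities of pairs at the extreme levels $c_0,c_1$; this is the mild obstacle one has to set up, after which both estimates fall out of bare functoriality.
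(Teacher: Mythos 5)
Your proof is correct. It rests on the same basic mechanism as the paper's: the pointwise bound $g\le f+\epsilon$ gives the one-sided estimate $\m(f,R)\le\m(g,R)+\epsilon$, and symmetrizing in $f,g$ (and arguing dually for $\s$) gives the Lipschitz bound. The only difference is in implementation: the paper deduces the monotonicity $f\le g\Rightarrow\m(f,R)\le\m(g,R)$ from Lemma~\ref{max} (the $\inf\max$ description over descending cycles) and then shifts by the constant $|f-g|_{C^0}$, which makes the proof a two-line argument; you instead work directly from the homological definition, using the inclusion $g^c\subset f^{c+\epsilon}$ of sublevel sets, a common choice of extreme levels $c_0,c_1$ where the pairs for $f$ and $g$ coincide, and functoriality to transport the generator $\Xi$ (resp.\ $\Delta$). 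Your route is slightly longer but self-contained at the level of the definitions and handles $\s$ by the verbatim dual argument without invoking Lemma~\ref{max}; the paper's route is shorter because that lemma is already available. One cosmetic remark: the choice of $c_0,c_1$ should be made (or enlarged) after fixing the test level $c$, so that $c+\epsilon\le c_1$, but since $c$ can be taken arbitrarily close to $\m(g,R)$ and $c_1$ arbitrarily large this is harmless.
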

\begin{proof}
For $f\leq g$, from Lemma \ref{max}, it is easy to see that
$\m(f)\leq \m(g)$. In the general case, this implies $\m(g)\leq
\m(f+|g-f|)\leq \m(f)+ |g-f|_{C^0}$\/; exchanging $f$ and $g$\/, we
get $\m(f)\leq\m(g)+|f-g|_{C^0}$.
\end{proof}

%\begin{rem} We can thus extend the definition of $\m$ and $\s$ to
%continuous function, simply by approximation with $C^2$ function.
%\end{rem}

\begin{cor}
To prove Theorem \ref{sm}, it suffices to establish it for
\emph{excellent Morse} functions $f:X\rightarrow\R$\/, i.e. smooth
functions having only  non-degenerate critical points, each of which
corresponds to a different value of $f$\/.
\end{cor}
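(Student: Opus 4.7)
The plan is to combine the Lipschitz continuity in Proposition~\ref{st} with a $C^0$-density argument. Let $\mathcal{F}_Q$ denote the set of continuous functions on $X$ that are quadratic at infinity with reference quadratic form $Q$; Proposition~\ref{st} says that both $\m(\cdot,R)$ and $\s(\cdot,R)$ are $1$-Lipschitz on $\mathcal{F}_Q$ for the $C^0$ metric. So if I can show that the excellent Morse functions in $\mathcal{F}_Q$ are $C^0$-dense, and if Theorem~\ref{sm} is known in that subclass, then picking a sequence $g_n \to f$ of excellent Morse functions in $\mathcal{F}_Q$ yields
\[
\m(f,\mathbb{F}) = \lim_n \m(g_n,\mathbb{F}) = \lim_n \s(g_n,\mathbb{F}) = \s(f,\mathbb{F}),
\]
which is the desired conclusion.

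First I would carry out the density statement. Write $f = Q$ outside a compact set $K$, and fix a smooth cutoff $\chi$ with $\chi \equiv 1$ on a neighbourhood of $K$ and $\chi$ compactly supported in a larger compact set $K'$. For small $\varepsilon > 0$, convolve $f$ with a standard mollifier on a neighbourhood of $K'$, and then interpolate with $Q$ via $\chi$, setting
\[
\tilde f := \chi \cdot (f * \rho_\varepsilon) + (1-\chi) Q.
\]
Because $f$ and $Q$ agree outside $K$, $\tilde f$ is smooth, still lies in $\mathcal{F}_Q$ (it equals $Q$ outside $K'$), and satisfies $|\tilde f - f|_{C^0}$ as small as desired. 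Then I would apply a standard transversality argument to perturb $\tilde f$ by a small smooth function compactly supported in the interior of $K'$: generically the resulting function has only non-degenerate critical points, and a further generic perturbation separates all critical values, producing an excellent Morse function $g \in \mathcal{F}_Q$ with $|g - f|_{C^0}$ arbitrarily small.

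The one point that requires care, and is the main obstacle, is preserving membership in $\mathcal{F}_Q$ throughout the approximation: both the smoothing and the Morse perturbations must be compactly supported so that the approximant still equals $Q$ outside a compact set and Proposition~\ref{st} remains applicable. Once this is arranged, the corollary follows immediately from the Lipschitz continuity of $\m$ and $\s$ and the assumed equality in the excellent Morse case.
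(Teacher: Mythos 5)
Your proposal is correct and follows essentially the same route as the paper: the paper also deduces the corollary from the $C^0$-Lipschitz continuity of $\m$ and $\s$ (Proposition~\ref{st}) together with the $C^0$-density of excellent Morse functions among continuous functions equal to $Q$ off a compact set, a density fact it cites as standard and you merely spell out via cutoff, mollification and generic perturbation supported in a compact set.
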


\begin{proof}
By a standard argument, given a non-degenerate quadratic form $Q$ on
$X$\/, the set of all continuous functions on $X$ equal to $Q$ off a
compact subset contains a $C^0$-dense subset consisting of excellent
Morse functions; our result follows by Proposition~\ref{st}.
\end{proof}

To prove Theorem \ref{sm} for excellent Morse functions, we will use
Morse homology.

\subsection*{Hypotheses}
We consider an excellent Morse function $f$ on $X$\/, quadratic at
infinity; for each pair of regular values $b<c$ of $f$\/, we denote
by $f_{b,c}$ the restriction of $f$ to $f^c\cap(-f)^{-b}=\{b\leq
f\leq c\}$\/.

\subsection*{Morse complexes}
Let
\[
C_k(f_{b,c}):=\{\xi_\ell^k:1\leq \ell\leq m_k\}
\]
denote the set of critical points of index $k$ of $f_{b,c}$\/,
ordered so that $f(\xi_{\ell}^k)< f(\xi_{m}^k)$ for $\ell<m$\/.
Given a generic gradient-like vector field $V$ for $f$ such that
$(f,V)$ is Morse-Smale\footnote{ Being Morse-Smale means that the
stable and unstable manifolds of all the critical points are
transversal.}, the \emph{Morse complex} of $(f_{b,c},V)$ over $R$
consists of the free $R$-modules
\[
M_k(f_{b,c},R):=\{\sum_{\ell} a_\ell \xi_\ell^k,\quad a_\ell\in R\}
\]
together with the boundary operator $\p: M_k(f_{b,c},R)\to
M_{k-1}(f_{b,c},R)$ given by
\[
\p \xi_\ell^k:=\sum_m \nu_{f,V}(\xi_\ell^k,\xi_{m}^{k-1})\xi_m^{k-1}
\]
where, with given orientations for the stable manifolds (hence
co-orientations for unstable manifolds), $\nu_{f,V}$ is the
intersection number of the stable manifold $W^s(\xi_l^k)$ of
$\xi_l^k$ and the unstable manifold $W^u(\xi_m^{k-1})$ of
$\xi_m^{k-1}$, i.e. the algebraic number of trajectories of $V$
connecting $\xi_\ell^k$ and $\xi_{m}^{k-1}$\/; note that
\begin{itemize}
  \item $\nu_{f,V}(\xi_\ell^k,\xi_{m}^{k-1})$ is the same for all $b,c$ with $f(\xi_\ell^k),f(\xi_{m}^{k-1})$ in
  $[b,c]$;
  \item $\nu_{f,V}(\xi_\ell^k,\xi_{m}^{k-1})\neq0$ implies $f(\xi_\ell^k)>f(\xi_{m}^{k-1})$\/: otherwise, the stable manifold of $\xi_{m}^{k-1}$ and the unstable manifold of $\xi_\ell^k$ for $V$\/, which cannot be transversal because of their dimensions, would intersect, contradicting the genericity of $V$\/.
  \item $\nu_{f,V}(\xi_l^k,\xi_m^k)=0$ for two distinct critical
  points of the same index.
\end{itemize}
This does define a complex, i.e. $\p\circ\p=0$\/: see for example
\cite{FL,ML}. The homology $HM_*(f_{b,c},R):= H_*(M_*(f_{b,c},R))$
is called the \emph{Morse homology}\footnote{Morse homology is
defined in general for any Morse function without being excellent.}
of $f_{b,c}$\/.

\begin{lem}[Barannikov,\cite{Br}]
\label{B} If $R$ is a field $\mathbb{F}$\/, then this boundary
operator $\p$ has a special kind of Jordan normal form as follows:
each $M_k(f_{b,c},\mathbb{F})$ has a basis
\begin{equation}
\label{eq31} \T^k_\ell:=\sum_{i\leq
\ell}\alpha_{\ell,i}\xi_i^k,\quad \alpha_{\ell,\ell}\neq 0
\end{equation}
such that either $\p \T_\ell^k=0$ or $\p \T_\ell^k=\T_m^{k-1}$ for
some $m$, in which case no $\ell'\neq\ell$ satisfies $\p
\T_{\ell'}^k=\T_m^{k-1}$\/. If $(\Theta_\ell^k)$ is another such
basis, then $\p \Xi_\ell^k=\Xi_m^{k-1} \;(\hbox{resp. } 0)$ is
equivalent to $\p\Theta_\ell^k=\Theta_m^{k-1}\;(\hbox{resp. } 0)$;
in other words, the matrix of $\p$ in all such bases is the same.
\end{lem}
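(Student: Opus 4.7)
The plan is to view $M_*(f_{b,c}, \mathbb{F})$ as a filtered complex and construct the $\Xi^k_\ell$ by a single pass over the critical points in increasing $f$-order, modifying previously-defined basis elements only when forced to. List all critical points of $f_{b,c}$ as $c_1,\ldots,c_N$ with $f(c_1)<\cdots<f(c_N)$, write $k(j)$ for the index of $c_j$, and set $F_j := \mathrm{span}_{\mathbb{F}}\{c_1,\ldots,c_j\}$. By the second bullet in the definition of $\partial$, $\partial c_j\in F_{j-1}$, so each $F_j$ is a subcomplex, and I would build the $\Xi^k_\ell$ by induction on $j$, maintaining the Barannikov invariant on the basis of $F_j$ at every stage.

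At step $j$, let $k:=k(j)$ and let $\ell$ be the position of $c_j$ among index-$k$ critical points already processed. Expanding $\partial c_j$ in the existing basis of $F_{j-1}\cap M_{k-1}$, the fact that $\partial c_j$ is itself a cycle together with the uniqueness-of-targets clause of the normal form forces the coefficients on every non-cycle $\Xi^{k-1}_m$ to vanish. The remaining cycle terms split into \emph{dead} (those already equal to $\partial\Xi^k_{\ell'}$ for some $\ell'<\ell$) and \emph{alive}. Dead contributions are removed by passing to $\tilde c_j := c_j - \sum\lambda_m\Xi^k_{\ell'(m)}$, which retains the triangular form because each $\Xi^k_{\ell'}$ used is supported on $\{\xi^k_i:i\leq\ell'<\ell\}$. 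If no alive term survives I set $\Xi^k_\ell:=\tilde c_j$, opening a new alive cycle. Otherwise let $m^*$ be the largest alive index whose coefficient $\mu_{m^*}$ is nonzero; I set $\Xi^k_\ell:=\tilde c_j$ and \emph{redefine} $\Xi^{k-1}_{m^*}:=\sum_{m\leq m^*,\ \text{alive}}\mu_m\Xi^{k-1}_m$. The redefined element is still a cycle (being a combination of cycles), retains the triangular form of $(3.1)$ with leading coefficient $\mu_{m^*}\alpha_{m^*,m^*}\neq 0$, satisfies $\partial\Xi^k_\ell=\Xi^{k-1}_{m^*}$, and breaks no previously established pairing since $\Xi^{k-1}_{m^*}$ was alive.

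For uniqueness I would argue that the pairing matrix is already determined by the filtered complex $(F_\bullet,\partial)$, independent of the basis. For $a\leq b$ set $r_k(a,b):=\dim_{\mathbb{F}}\im(H_k(F_a;\mathbb{F})\to H_k(F_b;\mathbb{F}))$; computed in any Barannikov basis this rank counts the degree-$k$ classes born by time $a$ and still alive at time $b$, so a standard inclusion-exclusion recovers the multiset of persistence pairs purely from $r_k(\cdot,\cdot)$, which is a homological invariant of $F_\bullet$. The main obstacle in the argument is the bookkeeping of the redefinition step: the new $\Xi^{k-1}_{m^*}$ must simultaneously preserve triangularity, remain a cycle, and leave all previously established Barannikov relations intact. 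What makes this go through is precisely the two structural constraints built into the normal form — nonvanishing leading coefficients $\alpha_{m,m}$ and uniqueness of the target of each $\partial\Xi^k_{\ell'}$ — which together guarantee that the only cycles whose expression we ever need to revise are those still alive.
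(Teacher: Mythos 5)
Your construction is correct, and its core is the same Gaussian-elimination argument as the paper's: expand $\p$ of the new critical point in the basis built so far, cancel the terms that are already boundaries of earlier $\T^k_{\ell'}$ (this preserves the triangular form \eqref{eq31}), and if a residue survives, declare it to be the new value of the top surviving cycle $\T^{k-1}_{m^*}$ and pair it with the new element --- exactly the paper's replacement of $\T_{j_0}^{k-1}$ by $\tilde\T_{j_0}^{k-1}$. The differences are organizational and in coverage. First, you run the induction along the single filtration by critical value ($F_1\subset\cdots\subset F_N$) rather than the paper's lexicographic order (all indices $p<k$ first, then index $k$ in increasing value); both work, since in either order the needed degree-$(k-1)$ basis elements are available when $\p c_j$ is expanded, and your filtration viewpoint meshes well with the invariance argument. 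Second, you make explicit a point the paper glosses over: $\p\p=0$ together with the distinctness of targets forces the coefficients of all non-cycle $\T^{k-1}_m$ to vanish, which is what guarantees that the element being redefined is an ``alive'' cycle and that no previously established pairing is destroyed. Third, and most substantially, you actually address the uniqueness clause of the lemma (that the pairing matrix is the same in every such basis), which the paper's proof omits entirely: your argument via the rank function $r_k(a,b)=\dim\im\bigl(H_k(F_a;\mathbb{F})\to H_k(F_b;\mathbb{F})\bigr)$ and inclusion--exclusion is the standard persistence-module invariance argument and is correct, provided you note (as you implicitly do) that any basis of the form \eqref{eq31} is adapted to the filtration, so the rank can be read off from the pairing, and that the critical values of an excellent Morse function are distinct, so the multiset of birth--death pairs determines which critical points are coupled. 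So your route buys a complete proof of the statement as formulated, at the cost of slightly heavier bookkeeping in the redefinition step, which you handle correctly.
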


\begin{proof} We prove existence by induction. Given nonegative integers $k,i$ with $i<m_k$\/, suppose that vectors $\Xi_q^p$ of the  form \eqref{eq31} have been obtained for all $(p,q)$ with either $p<k$, or $p=k$ and $q\leq i$\/, possessing the required property that either $\p \Xi_q^p=\Xi_{j_p(q)}^{p-1}$ (with $j_p(q)\neq j_p(q')$ for $q\neq q'$\/) or $\p \Xi_q^p=0$\/. If $\p\xi_{i+1}^k=0$ (e.g., when $k=0$\/), we take $\xi_{i+1}^k:=\Xi_{i+1}^k$ and continue the induction. Otherwise, $\p\xi_{i+1}^k=\sum \alpha_j \Xi_j^{k-1}$, $\alpha_j\in \mathbb{F}$. Moving all the terms $\Xi_{j_k(q)}^{k-1}=\p\Xi_q^k, q\leq i$ from the right-hand side to the left, we get
\[
\p\big(\xi_{i+1}^k-\sum_{q\leq i} \alpha_{j_k(q)}\Xi_q^k\big)=\sum_j
\beta_j \Xi_j^{k-1}\/.
\]
Let
\[
\Xi_{i+1}^k:=\xi_{i+1}^k-\displaystyle\sum_{q\leq i}
\alpha_{j_k(q)}\Xi_q^k\/.
\]
If $\beta_j=0$ for all $j$, then $\p\Xi_{i+1}^k=0$ and the induction
can go on. Otherwise,
\[
\p\Xi_{i+1}^k=\sum_{j\leq j_0}\beta_j
\Xi_j^{k-1}=:\tilde{\Xi}_{j_0}^{k-1} \hbox{ with } \beta_{j_0}\neq0;
\]
as $\p\tilde{\Xi}_{j_0}^{k-1}=\p\p\Xi_{i+1}^k=0$\/, we can replace
$\Xi_{j_0}^{k-1}$ by $\tilde{\Xi}_{j_0}^{k-1}$ and continue the
induction\footnote{Note that if $\mathbb{F}$ was not a field, this
would not provide a basis for noninvertible $\beta_{j_0}$.}.
\end{proof}

\begin{defn}
\label{def34} \rm Under the hypotheses and with the notation of the
Barannikov lemma, two critical points $\xi_\ell^k$ and $\xi_m^{k-1}$
of $f_{b,c}$ are \emph{coupled} if $\p \T_\ell^k=\T_m^{k-1}$. A
critical point is \emph{free} (over $\mathbb{F}$) when it is not
coupled with any other critical point.
\end{defn}

In other words, $\xi_\ell^k$ is free if and only if  $\T_\ell^k$ is
a cycle of $M_k(f_{b,c},\mathbb{F})$ but not a boundary, hence the
following result:

\begin{cor}
\label{cor35} For each integer $k$\/, the Betti number
$\dim_\mathbb{F}HM_k(f_{b,c},\mathbb{F})$ is the number of free
critical points of index $k$ of $f_{b,c}$ over $\mathbb{F}$\/.\qed
\end{cor}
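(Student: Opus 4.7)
The plan is to read off homology directly from the Barannikov basis, exploiting the fact that $\partial$ acts on the basis $(\Xi^k_\ell)$ essentially as a \emph{partial permutation}: each $\Xi^k_\ell$ is either sent to $0$ or to some basis vector $\Xi^{k-1}_m$, and by Lemma~\ref{B} the indices $m$ obtained this way are pairwise distinct.

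First I would compute the cycles $Z_k(f_{b,c},\mathbb{F}) = \ker(\partial : M_k \to M_{k-1})$. For any $c=\sum_\ell a_\ell \Xi^k_\ell$, the image $\partial c = \sum_{\ell:\partial \Xi^k_\ell\neq 0} a_\ell \Xi^{k-1}_{m(\ell)}$ is a combination of \emph{distinct} basis vectors of $M_{k-1}$, so it vanishes iff $a_\ell = 0$ whenever $\partial \Xi^k_\ell\neq 0$. Hence $Z_k$ has basis $\{\Xi^k_\ell : \partial \Xi^k_\ell = 0\}$.

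Next I would compute the boundaries $B_k = \partial M_{k+1}$. Directly from the Barannikov property, $B_k$ has basis $\{\partial \Xi^{k+1}_j : \partial \Xi^{k+1}_j \neq 0\} = \{\Xi^k_m : \xi^k_m \text{ is coupled to some } \xi^{k+1}_j\}$; the latter really is a basis (not merely a spanning set) because of the uniqueness clause in Lemma~\ref{B}. Note $B_k \subset Z_k$, as is automatic from $\partial\circ\partial = 0$.

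Finally, I would form the quotient: $HM_k(f_{b,c},\mathbb{F}) = Z_k / B_k$ admits as basis the classes of those $\Xi^k_\ell$ that lie in $Z_k$ but not in $B_k$, i.e. those $\xi^k_\ell$ such that $\partial \Xi^k_\ell = 0$ \emph{and} $\xi^k_\ell$ is not of the form $\partial \Xi^{k+1}_j$. By Definition~\ref{def34} these are exactly the free critical points of index $k$, and counting them yields the claimed identity. There is essentially no obstacle here---the work has all been done in Lemma~\ref{B}; the corollary is just the observation that a partial-permutation boundary operator has homology of dimension equal to the number of basis vectors unmatched on either side.
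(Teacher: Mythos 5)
Your argument is correct and is essentially the paper's own (the paper states just before the corollary that $\xi_\ell^k$ is free iff $\Xi_\ell^k$ is a cycle but not a boundary, and leaves the partial-permutation linear algebra implicit with a \qed). You simply spell out the computation of $Z_k$, $B_k$, and the quotient, which is exactly the intended justification.
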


\begin{thm}
\label{thm36}

\begin{enumerate}
  \item The Barannikov normal form of the Morse complex of $f_{b,c}$ over $\mathbb{F}$ is independent of the gradient-like vector field $V$.
  \item So is the Morse homology $HM_*(f_{b,c},R)$\/; it is isomorphic to $H_*(f^c,f^b;R)$\/.
  \item For $b'\leq b<c\leq c'$\/, the inclusion  $i:f^c\hookrightarrow f^{c'}$\/, restricted to the critical set $C_*(f_{b,c})$\/, induces a linear map $i_*:M_*(f_{b,c},R)\rightarrow M_*(f_{b',c'},R)$ such that $\p\circ i_*=i_*\circ \p$ and therefore a linear map $i_*:HM_*(f_{b,c},R)\rightarrow HM_*(f_{b',c'},R)$\/, which is the usual $i_*:H_*(f^c,f^b;R)\rightarrow H_*(f^{c'},f^{b'};R)$ modulo the isomorphism (ii).
\end{enumerate}
\end{thm}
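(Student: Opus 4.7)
The plan is to establish (ii) first via the standard Morse-theoretic CW description of $(f^c,f^b)$, then derive (i) from the persistence interpretation of the Barannikov normal form, and obtain (iii) by naturality.

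For (ii), I would build a relative CW structure on $(f^c,f^b)$ in which each critical point $\xi_\ell^k$ of $f_{b,c}$ contributes one $k$-cell. This proceeds inductively over the critical values: between consecutive critical values the flow of $-V$ gives a deformation retraction, and at each critical level one attaches a cell modeled on the unstable manifold of the corresponding critical point intersected with the relevant slab. The decisive step is to identify the cellular boundary of the cell $e_\ell^k$ with the Morse boundary $\sum_m \nu_{f,V}(\xi_\ell^k,\xi_m^{k-1})\,e_m^{k-1}$: the attaching map lands in the $(k-1)$-skeleton, and its degree onto the sphere collapse of $e_m^{k-1}$ is precisely the signed count of $V$-trajectories from $\xi_\ell^k$ to $\xi_m^{k-1}$, by the Morse-Smale transversality. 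Once this is established, $M_*(f_{b,c},R)$ is the cellular complex of $(f^c,f^b)$, which yields simultaneously the isomorphism with $H_*(f^c,f^b;R)$ and the independence of the homology from $V$.

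For (i), more structure is at stake than homology alone: the entire Barannikov normal form must be shown to be intrinsic to $f$. I would argue that the CW construction in (ii) is compatible with the filtration by sublevel sets, so that for every regular value $a\in[b,c]$ the inclusion $M_*(f_{b,a},\mathbb{F})\hookrightarrow M_*(f_{b,c},\mathbb{F})$ corresponds to the map induced by $f^a\subset f^c$. Consequently the filtered chain complex indexed by $a$ depends, up to filtered isomorphism, only on $f$. The Barannikov normal form is a complete invariant of such a filtered complex over a field: the coupled pairs $(\xi_\ell^k,\xi_m^{k-1})$ record the filtration levels at which a class is born and subsequently dies, the free generators record surviving classes, and the uniqueness clause in Lemma~\ref{B} makes this data intrinsic. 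Hence the normal form is independent of $V$.

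Part (iii) is essentially formal. The map $i_*:M_*(f_{b,c},R)\to M_*(f_{b',c'},R)$ is defined on generators by the inclusion $C_*(f_{b,c})\subset C_*(f_{b',c'})$, and commutes with $\p$ by the first bullet following the definition of the Morse boundary, which asserts that the numbers $\nu_{f,V}(\xi_\ell^k,\xi_m^{k-1})$ do not depend on which slab one computes them in. Compatibility with the topological inclusion under the isomorphism in (ii) follows from naturality, since the cells of $(f^c,f^b)$ built in the proof of (ii) form a subcomplex of those of $(f^{c'},f^{b'})$. The principal obstacle of the whole proof is the degree computation at the heart of (ii): identifying the cellular attaching map with the algebraic count of gradient-like trajectories requires a careful local model near each critical point and the transversality of stable and unstable manifolds; everything else reduces to this together with the uniqueness assertion in Lemma~\ref{B}.
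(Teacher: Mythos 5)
Your treatment of (ii) and (iii) is essentially the paper's: the paper sketches the same identification, working slab by slab (a subdivision $b=b_0<\cdots<b_N=c$ with one critical point per slab, i.e.\ the filtration whose associated complex is your cellular complex) and identifying the connecting homomorphism $H_{k+1}(f^{b_{i+1}},f^{b_i})\to H_k(f^{b_i},f^{b_{i-1}})$ with the signed count of connecting trajectories; your ``principal obstacle'' is exactly the step the paper delegates to \cite{FL,ML}. Where you genuinely diverge is (i). The paper proves independence of the normal form directly at the chain level: connecting two generic gradient-like fields $V_0,V_1$ by a generic family, the two Morse complexes differ by an upper-triangular change of variables with unit diagonal, and then the uniqueness clause of Lemma~\ref{B} (same normal form for all triangular bases of a given complex) concludes. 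You instead argue through persistence: the normal form should be recoverable from the sublevel filtration data, which by (ii)--(iii) is independent of $V$. That route is viable and arguably more conceptual (it identifies the Barannikov form with the persistence barcode of the sublevel filtration), at the cost of making (i) depend on (ii) and on the filtration-compatibility part of (iii), whereas the paper's argument for (i) is self-contained at the level of Morse complexes.

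There is, however, a soft spot in your (i) as written. The uniqueness clause of Lemma~\ref{B} compares different bases of the form \eqref{eq31} of one fixed complex (one fixed $V$); it does not by itself say that the pairing is intrinsic to the filtration, so it cannot carry the weight you put on it. Likewise, ``the filtered chain complex depends, up to filtered isomorphism, only on $f$'' is stronger than what (ii)--(iii) give you directly: what you actually get is that, for regular values $a\le a'$ in $[b,c]$, the maps $HM_*(f_{b,a},\mathbb{F})\to HM_*(f_{b,a'},\mathbb{F})$ agree with the $V$-independent maps $H_*(f^a,f^b;\mathbb{F})\to H_*(f^{a'},f^b;\mathbb{F})$. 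To close the gap you must add the (easy but necessary) rank-reading argument: in a Barannikov basis the persistence module of the filtration splits into intervals whose endpoints are the critical values of the coupled pairs and free points, so the ranks of the above maps across each critical value determine which values are coupled and which are free; since $f$ is excellent, distinct critical points have distinct values, so this determines the coupling of critical points themselves, hence the normal form, independently of $V$. With that step made explicit your proof of (i) is correct, though different in mechanism from the paper's triangular-change-of-basis argument.
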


\noindent{\emph{Idea of the proof}} \cite{FL}. $(1)$ Connecting two
generic gradient-like vector fields $V_0$, $V_1$ for $f$  by a
generic family, one can prove that each of the Morse complexes
defined by $V_0$ and $V_1$ is obtained from the other by a change of
variables whose matrix is upper-triangular with all diagonal entries
equal to $1$.

$(2)$ When there is no critical point of $f$ in $\{b\leq f\leq
c\}$\/, both $HM_*(f_{b,c},R)$ and $H_*(f^c,f^b;R)$ are trivial (the
flow of $V$ defines a retraction of $f^c$ onto $f^b$\/).

When there is only one critical point $\xi$ of $f$ in $\{b\leq f\leq
c\}$\/, of index $\lambda$\/,
\[
 HM_k(f_{b,c},R)\simeq H_k(f^c,f^b;R)\simeq\begin{cases}
      & R,\,\text{ if } k=\lambda, \\
      & 0\, \text{ otherwise}:
\end{cases}
\]
the class of $\xi$ obviously generates $HM_\lambda(f_{b,c},R)$\/,
whereas a generator of $H_\lambda(f^c,f^b;R)$ is the class of a cell
of dimension $\lambda$\/, namely the stable manifold of $\xi$ for
$V|_{\{b\leq f\leq c\}}$\/; the isomorphism associates the second
class to the first.

In the general case, one can consider a subdivision
$b=b_0<\cdots<b_N=c$ consisting of regular values of $f$ such that
each $f_{b_j,b_{j+1}}$ has precisely one critical point. One can
show that the boundary operator $\p$ of the relative singular
homology $\p: H_{k+1}(f^{b_{i+1}},f^{b_i})\to
H_k(f^{b_i},f^{b_{i-1}})$ can be interpreted as the intersection
number of the stable manifold of the critical point in $\{b_i\leq
f\leq b_{i+1}\}$ and the unstable manifold of that in $\{b_{i-1}\leq
f\leq b_i\}$, i.e., their algebraic number of connecting
trajectories.

$(3)$ The first claims are easy. The last one follows from what has
just been sketched. \qed

\begin{cor}
\label{cor37} If $f$ is an excellent Morse function quadratic at
infinity, then it has precisely one free critical point $\xi$ over
$\mathbb{F}$\/; its index $\lambda$ is that of the reference
quadratic form $Q$ and
\[
\underline{\gamma}(f,\mathbb{F})=f(\xi)\/.
\]
\end{cor}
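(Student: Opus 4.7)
\textbf{Proof plan for Corollary \ref{cor37}.}

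The plan is to combine Theorem~\ref{thm36}(ii), which identifies Morse homology with relative singular homology, with Corollary~\ref{cor35}, which counts free critical points, and then to read off $\underline{\gamma}(f,\mathbb{F})$ directly from the Barannikov normal form. First I would let $b\to-\infty$, $c\to+\infty$ in Theorem~\ref{thm36}(ii), so that $HM_*(f,\mathbb{F})\cong H_*(\sp,\sn;\mathbb{F})$, which is $\mathbb{F}$ in degree $\lambda$ and zero otherwise. By Corollary~\ref{cor35}, the number of free critical points of index $k$ is the $k$-th Betti number; hence there is a unique free critical point $\xi$ and its index is $\lambda$. This settles the first two assertions.

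The main content is the identification $\underline{\gamma}(f,\mathbb{F})=f(\xi)$. The key observation I would exploit is that the Barannikov basis restricts to a basis of each sub-complex $M_*(f_{-\infty,c},\mathbb{F})$: indeed, since $\T_\ell^k=\sum_{i\le\ell}\alpha_{\ell,i}\xi_i^k$ with $\alpha_{\ell,\ell}\neq 0$, and since the ordering within a given index is by $f$-value, the elements $\T_\ell^k$ with $f(\xi_\ell^k)\le c$ are supported on critical points of value $\le c$ and form an upper-triangular change of basis of the restricted complex. Moreover, because $\partial$ strictly decreases $f$-value, the coupling $\partial\T_\ell^k=\T_m^{k-1}$ in the full complex is inherited by the restricted one whenever $\T_\ell^k$ lies there. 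In particular, an element $\T_\ell^k$ in the restricted complex is a cycle in the restricted complex iff it is a cycle in the full complex.

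Once this is in place, computing $i_{c*}:HM_\lambda(f_{-\infty,c},\mathbb{F})\to HM_\lambda(f,\mathbb{F})$ is essentially book-keeping. For $c\ge f(\xi)$, the basis element $\T_\xi^\lambda$ lies in the restricted complex, is a cycle there, is not a boundary (as $\xi$ is free in the full complex), and its class is sent to the generator $[\T_\xi^\lambda]$ of $HM_\lambda(f,\mathbb{F})$; hence $i_{c*}$ is onto. For $c<f(\xi)$, every index-$\lambda$ Barannikov generator $\T_\ell^\lambda$ in the restricted complex is either free in the full complex (impossible, as $\xi$ is the only free critical point) or coupled to some $\T_q^{\lambda+1}$ in the full complex with $f(\xi_q^{\lambda+1})>c$; in the latter case $\T_\ell^\lambda=\partial\T_q^{\lambda+1}$ is a boundary in the full complex, so its image under $i_{c*}$ vanishes. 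Thus $i_{c*}=0$, and $i_{c*}$ is not onto because the full homology is nonzero. Combining the two cases gives $\underline{\gamma}(f,\mathbb{F})=f(\xi)$.

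The main obstacle I anticipate is making the restriction statement of the Barannikov basis genuinely rigorous: one must verify that the upper-triangularity with respect to the ordering by $f$-value really does hand us a basis of the sub-complex, and that the uniqueness clause in Lemma~\ref{B} applies equally to the restricted complex so that ``free'' and ``coupled'' in the restricted complex agree with the corresponding notions inherited from the full complex when the relevant partner is retained. Everything else is a direct translation through the isomorphism of Theorem~\ref{thm36}(ii).
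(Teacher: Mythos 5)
Your proposal is correct and follows essentially the same route as the paper: the first two claims come from Corollary~\ref{cor35}, and $\underline{\gamma}(f,\mathbb{F})=f(\xi)$ is read off from the behaviour of $i_{c*}$ at the Morse-complex level (Theorem~\ref{thm36}(iii)), your explicit check that the Barannikov basis is compatible with the sublevel filtration being precisely the detail the paper leaves implicit. One small repair: for $c<f(\xi)$ an index-$\lambda$ generator $\Xi_\ell^\lambda$ of the restricted complex may also be coupled downward ($\partial\Xi_\ell^\lambda=\Xi_m^{\lambda-1}\neq 0$), a case missing from your dichotomy, but such generators are not cycles and hence do not contribute homology classes, so your conclusion $i_{c*}=0$ stands.
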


\begin{proof}
Clearly, the dimension of
\[
HM_k(f,\mathbb{F})=HM_k(f_{-\infty,\infty},\mathbb{F})\simeq
H_k(f^\infty,f^{-\infty};\mathbb{F}) =
H_k(Q^\infty,Q^{-\infty};\mathbb{F})
\]
is $1$ if $k=\lambda$ and $0$ otherwise. The first two assertions
follow by Corollary~\ref{cor35}. To prove
$\underline{\gamma}(f,\mathbb{F)}=f(\xi)$\/, note that
$\underline{\gamma}(f)$ is the infimum of the regular values $c$ of
$f$ such that the class of $\xi$ in
$HM_{\lambda}(f_{-\infty,\infty},\mathbb{F})$ lies in the image of
$i_{c*}:HM_{\lambda}(f_{-\infty,c},\mathbb{F})\rightarrow
HM_{\lambda}(f_{-\infty,\infty},\mathbb{F})$\/; by
Theorem~\ref{thm36}~(iii), which means $c\geq f(\xi)$\/.
\end{proof}

%\smallskip
%
%It will follow that the proof of Theorem~\ref{sm} is reduced to

\begin{prop}
\label{prop38} The excellent Morse function $-f_{b,c}=(-f)_{-c,-b}$
has the same free critical points over the field $\mathbb{F}$ as
$f_{b,c}$\/.
\end{prop}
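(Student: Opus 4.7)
The plan is to identify the Morse complex of $-f_{-c,-b}$ with the dual of the Morse complex of $f_{b,c}$ and then invoke a self-duality of the Barannikov pairing.

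First I would observe that $f$ and $-f$ share the same critical points in $X$, with indices related by $\mathrm{ind}_{-f}(\xi) = n - \mathrm{ind}_f(\xi)$, and that $-V$ is gradient-like for $-f$ whenever $V$ is for $f$, with stable and unstable manifolds exchanged. Consequently $\nu_{-f,-V}(\eta,\xi) = \pm\,\nu_{f,V}(\xi,\eta)$; hence, up to signs and the grading shift $k \mapsto n-k$, the Morse chain complex of $-f_{-c,-b}$ coincides with the Morse cochain complex of $f_{b,c}$, equipped with the filtration reversed from the $f$-filtration.

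Second, starting from a Barannikov basis $\{\Xi_\ell^k\}$ for $f_{b,c}$, I would check that the dual basis $\{\Xi_\ell^{k*}\}$ is of Barannikov form for this cochain complex under the reversed filtration. The triangularity of $\{\Xi_\ell^k\}$ in the $f$-ordering becomes the required triangularity of its dual in the $-f$-ordering, since the inverse transpose of a lower-triangular matrix is upper-triangular. By the Barannikov property, each column of the matrix of $\partial^f$ has at most one nonzero entry, so each row of its transpose $\delta = (\partial^f)^{T}$ does too: $\delta\,\Xi_m^{(k-1)*} = \Xi_\ell^{k*}$ precisely when $\partial^f \Xi_\ell^k = \Xi_m^{k-1}$, and $\delta\,\Xi_m^{(k-1)*} = 0$ otherwise. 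The pairs in the Barannikov form of $-f_{-c,-b}$ are therefore exactly those of $f_{b,c}$, with the roles of source and target of the boundary reversed; the free critical points consequently coincide.

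The main obstacle is the bookkeeping tying the orderings, indices, and sign conventions between the chain complex of $-f$ and the cochain complex of $f$. Over a field, signs cannot turn a zero into a nonzero entry, so the coupling pattern transfers unchanged, and the uniqueness statement of Lemma~\ref{B} guarantees that the pairing read off in this way is the genuine Barannikov pairing for $-f_{-c,-b}$, independent of further choices of gradient-like vector field.
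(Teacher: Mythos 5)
Your argument is correct and is essentially the paper's own proof in different clothing: the paper fixes $V$, notes that the boundary matrix of $M_*(-f_{b,c})$ for $-V$ is the transpose of that of $M_*(f_{b,c})$ with the order of rows and columns reversed, and then anti-transposes the conjugation $P^{-1}AP=B$ to see that the result is again a Barannikov normal form (for the $-f$-ordering), invoking its uniqueness --- exactly your observation that dualizing a Barannikov basis, with the filtration reversed, yields a Barannikov basis for the Morse complex of $-f$ with the same coupling pattern. The only difference is presentational (dual bases versus explicit upper-triangular matrix bookkeeping), so no further comment is needed.
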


\begin{proof}
Assuming $V$ fixed, this is essentially easy linear algebra:
\begin{itemize}
  \item One has $C_k(-f)=C_{n-k}(f)$ and the ordering of the corresponding critical values is reversed.  Thus, the lexicographically ordered basis of $M_*(-f)$ corresponding to $(\xi_\ell^k)_{1\leq \ell\leq m_k,0\leq k\leq n}$ is $(\xi_{m_{n-k}-\ell+1}^{n-k})_{1\leq \ell\leq m_{n-k},0\leq k\leq n}$\/.
  \item The vector field $-V$ has the same relations with $-f$ as $V$ has with $f$\/, hence $\nu_{-f,-V}(\xi_{m_{n-k}-\ell+1}^{n-k},\xi_{m_{n-(k-1)}-m+1}^{n-(k-1)})=\nu_{f,V}(\xi_{m_{n-(k-1)}-m+1}^{n-(k-1)},\xi_{m_{n-k}-\ell+1}^{n-k})$\/.
\end{itemize}
That is, the matrix of the boundary operator of $M_*(-f_{b,c})$ in
the basis $(\xi_{m_{n-k}-\ell+1}^{n-k})$ is the matrix $\tilde M$
obtained from the matrix $A$ of the boundary operator of
$M_*(f_{b,c})$ in the basis $(\xi_\ell^k)$ by symmetry with respect
to the second diagonal (i.e. by reversing the order of both the
lines and columns of the transpose of $A$\/).

Lemma~\ref{B} can be rephrased as follows: there exists a
block-diagonal matrix
\[
P=\hbox{diag}(P_0,\dots,P_n)
\]
where each $P_k\in \mathrm{GL}(m_k,\mathbb{F})$ is upper triangular,
such that
\begin{equation}
\label{eq32} P^{-1}AP=B
\end{equation}
is a Barannikov normal form, meaning the following: the entries of
the column of indices $_\ell^k$ are $0$ except possibly one, equal
to $1$\/, which must lie on the line of indices $_m^{k-1}$ for some
$m$ and be the only nonzero entry on this line. The normal form $B$
is the same for every choice of $P$ and $V$\/. Clearly, $\xi_\ell^k$
is a free critical point of $f_{b,c}$ if and only if both the line
and  column of indices $_\ell^k$ of $B$ are zero.

Equation \eqref{eq32} reads
\begin{equation}
\label{eq33} {\tilde P}{\tilde A}{\tilde P^{-1}}=\tilde B\/;
\end{equation}
Now, $\tilde P^{-1}$ and $\tilde P=(\tilde P^{-1})^{-1}$ are block
diagonal upper triangular matrices whose $k^{th}$ diagonal block
lies in $\mathrm{GL}(m_{n-k},\mathbb{F})$\/; therefore, by
\eqref{eq33}, as $\tilde B$ is a Barannikov normal form for the
ordering associated to $-f$\/, it is \emph{the} Barannikov normal
form of the boundary operator of $M_*(-f_{b,c})$\/, from which our
result follows at once.
\end{proof}

\begin{cor}
\label{cor39} For any excellent Morse function $f$ quadratic at
infinity, the sole free critical point of $-f$ over $\mathbb{F}$ is
the free critical point $\xi$ of $f$\/; hence
$\underline{\gamma}(f,\mathbb{F})=f(\xi)=-(-f)(\xi)=-\underline{\gamma}(-f,\mathbb{F})=\overline{\gamma}(f,\mathbb{F})$
by Corollary~\ref{cor37} and Lemma~\ref{lem29}, which proves
Theorem~\ref{sm}. \qed
\end{cor}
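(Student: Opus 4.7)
My plan is to simply chain together the three preceding results: Corollary~\ref{cor37}, Proposition~\ref{prop38}, and Lemma~\ref{lem29}. The statement is really a bookkeeping consequence once these are in hand, so the proof proposal is more about making the chain of implications explicit than about overcoming any serious obstacle.

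First, I would apply Corollary~\ref{cor37} to $f$: since $f$ is excellent Morse and quadratic at infinity with reference form $Q$ of index $\lambda$, there is exactly one free critical point $\xi$ of $f$ over $\mathbb{F}$, its index equals $\lambda$, and $\underline{\gamma}(f,\mathbb{F})=f(\xi)$. Next, I would invoke Proposition~\ref{prop38} in the global setting $b=-\infty$, $c=+\infty$ (or, more carefully, for each sufficiently large $c$ with $b=-c$, and then pass to the limit, but the free critical points stabilize as soon as all of them lie in $\{b\le f\le c\}$): the free critical points of $-f$ over $\mathbb{F}$ are exactly the free critical points of $f$, so $-f$ also has a unique free critical point, namely $\xi$ itself.

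Now I apply Corollary~\ref{cor37} to the function $-f$, which is again excellent Morse and quadratic at infinity (its reference form is $-Q$, of index $n-\lambda$, which matches the index at $\xi$ under the involution $f\mapsto -f$). This yields $\underline{\gamma}(-f,\mathbb{F})=(-f)(\xi)=-f(\xi)$. Finally, Lemma~\ref{lem29} gives $\overline{\gamma}(f,\mathbb{F})=-\underline{\gamma}(-f,\mathbb{F})=f(\xi)$, and combining with $\underline{\gamma}(f,\mathbb{F})=f(\xi)$ from the first step yields the desired equality $\underline{\gamma}(f,\mathbb{F})=\overline{\gamma}(f,\mathbb{F})$ for excellent Morse functions. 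Together with the density/continuity corollary to Proposition~\ref{st}, this establishes Theorem~\ref{sm}.

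The only delicate point in this plan is the passage to the ``global'' Barannikov normal form, i.e.\ justifying that applying Proposition~\ref{prop38} to $f_{b,c}$ for $[b,c]$ containing all critical values gives the right statement about $f$ itself. This is not really an obstacle: once $[b,c]$ contains every critical value, the Morse complex $M_*(f_{b,c},\mathbb{F})$ equals $M_*(f,\mathbb{F})$ in every degree, and the notion of free critical point is intrinsic by Theorem~\ref{thm36}(i). Everything else is a straightforward substitution, so no genuinely new argument is needed.
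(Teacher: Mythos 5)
Your proposal is correct and follows essentially the same route as the paper: apply Corollary~\ref{cor37} to $f$, use Proposition~\ref{prop38} to identify the unique free critical point of $-f$ with $\xi$, apply Corollary~\ref{cor37} to $-f$, and conclude via Lemma~\ref{lem29}, with the continuity corollary of Proposition~\ref{st} completing Theorem~\ref{sm}. Your extra remark justifying the global (all critical values in $[b,c]$) application of Proposition~\ref{prop38} is a reasonable precision, consistent with the paper's implicit use of $f_{-\infty,\infty}$.
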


Before we give an example where $\m(f,\Z)>\s(f,\Z)$\/, here is a
situation where this cannot occur:

\begin{prop}\label{sd}
Assume that $M_*(f,\Z)$ can be put into Barannikov normal form by a
basis change \eqref{eq31} of the free $\Z$-module $M_*(f,\Z)$\/: \be
\label{bz}\T^k_\ell:=\sum_{i\leq
\ell}\alpha_{\ell,i}^k\xi_i^k,\quad\alpha_{\ell,i}^k\in\Z\/,\quad
\alpha_{\ell,\ell}^k=\pm1\/. \ee Then, $\m(f,\Z)=\s(f,\Z)=f(\xi)$\/,
where $\xi$ is the sole free critical point of $f$ over $\Z$\/.
\end{prop}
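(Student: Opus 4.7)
The plan is to sandwich $\m(f,\Z)$ and $\s(f,\Z)$ between $f(\xi)$ and itself, using a direct argument for $\m(f,\Z)\leq f(\xi)$, the symmetric fact $\s(f,\Z)\geq f(\xi)$ obtained by applying the same argument to $-f$, and Proposition~\ref{prop210} to close the sandwich.

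First I would observe that the hypothesis $\alpha_{\ell,\ell}^k=\pm 1$ makes the change-of-basis matrix upper triangular and unimodular, hence invertible over $\Z$. Thus $(\T_\ell^k)$ is a genuine $\Z$-basis of $M_*(f,\Z)$ on which $\p$ acts in Barannikov form. In particular, exactly one basis vector $\T_{i_0}^\lambda$ is a cycle without being a boundary, and its class generates
\[
HM_\lambda(f,\Z)\simeq H_\lambda(\sp,\sn;\Z)\simeq\Z;
\]
tensoring with $\Q$ and invoking Corollary~\ref{cor37} identifies $\xi_{i_0}^\lambda$ with the free critical point $\xi$.

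For the upper bound on $\m$, the chain $\T_{i_0}^\lambda=\sum_{i\leq i_0}\alpha_{i_0,i}^\lambda\,\xi_i^\lambda$ is a $\Z$-linear combination of critical points of $f$-value at most $f(\xi)$, hence a cycle in $M_\lambda(f_{-\infty,c},\Z)$ for every regular $c\geq f(\xi)$. By Theorem~\ref{thm36}~(ii)--(iii), its class is sent by $i_{c*}$ to the generator of $H_\lambda(\sp,\sn;\Z)$, whence $\m(f,\Z)\leq f(\xi)$.

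For the lower bound on $\s$, I would transfer the hypothesis to $-f$. The proof of Proposition~\ref{prop38} boils down to deducing $\tilde P\tilde A\tilde P^{-1}=\tilde B$ from $P^{-1}AP=B$; our unimodularity hypothesis guarantees that $P$ and therefore $\tilde P$ are invertible integer matrices, so this manipulation remains valid over $\Z$, and $-f$ inherits a Barannikov $\Z$-basis satisfying \eqref{bz} whose sole free critical point is still $\xi$. Applying the upper bound already proved to $-f$ yields $\m(-f,\Z)\leq -f(\xi)$, i.e.\ $\s(f,\Z)\geq f(\xi)$ by Lemma~\ref{lem29}. Combining with Proposition~\ref{prop210} gives $f(\xi)\leq\s(f,\Z)\leq\m(f,\Z)\leq f(\xi)$, so all quantities equal $f(\xi)$. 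The only real obstacle is the integral version of Proposition~\ref{prop38}, which reduces to noting that the symmetry $P\mapsto\tilde P$ preserves upper-triangularity and unit diagonal, hence unimodularity.
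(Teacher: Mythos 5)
Your proof is correct, and it follows the paper's route in all essentials: you observe that the unimodular triangular change of basis makes $(\T_\ell^k)$ a genuine $\Z$-basis putting $\p$ in Barannikov form, you transfer this structure to $-f$ exactly as in the proof of Proposition~\ref{prop38} (noting that the anti-transposed matrix $\tilde P$ stays block upper triangular with $\pm1$ diagonal, hence invertible over $\Z$), and you convert the statement for $-f$ into one for $\s(f,\Z)$ via Lemma~\ref{lem29}. The one place you diverge is the closing step: the paper cites the argument of Corollary~\ref{cor37} over $\Z$ for both $f$ and $-f$, which gives the exact equalities $\m(f,\Z)=f(\xi)$ and $\m(-f,\Z)=-f(\xi)$ and in particular requires knowing that $i_{c*}$ is not onto for $c<f(\xi)$; you instead use only the easy halves $\m(f,\Z)\leq f(\xi)$ and $\m(-f,\Z)\leq -f(\xi)$, obtained by exhibiting the integral cycle $\T_{i_0}^\lambda$ (and its analogue for $-f$) below the relevant level, and you close the sandwich with the intersection-number inequality $\m(f,\Z)\geq\s(f,\Z)$ of Proposition~\ref{prop210}. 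This is a mild but genuine shortcut, in the spirit of Corollary~\ref{da}: it avoids analysing the image of $i_{c*}$ over $\Z$ below the level $f(\xi)$, at the small price of invoking the duality/intersection fact behind Proposition~\ref{prop210}; both arguments rest on the same key point, namely that unimodularity of the basis change is preserved under the passage from $f$ to $-f$.
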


\begin{proof}
We are in the situation of the proof of Proposition~\ref{prop38}
with $P_k\in \mathrm{GL}(m_k,\Z)$\/, which implies that the
Barannikov normal form $B$ of the boundary operator is the same for
$\Z$ as for $\Q$\/; it does follow that there is a unique free
critical point $\xi$ of $f$ over $\Z$ (the same as over $\Q$\/) and
that it is the unique free critical point of $-f$ over $\Z$\/;
moreover, the proof of Corollary~\ref{cor37} shows that
$\m(f,\Z)=\s(f,\Z)=f(\xi)$\/. We conclude as in
Corollary~\ref{cor39}.
\end{proof}
\begin{comment}
\subsection*{Remark} The hypothesis of the proposition
indicates that, by sliding handles, there exists some gradient-like
vector field $V$ that the associated boundary operator on
$M_*(f,\Z)$ has Barannikov normal form. In this case, the stable
manifold (resp.unstable manifold ) of the free critical point $\xi$
is itself a descending (resp. ascending) simplex along which $f$
reaches its maximum (resp. minimum) at $\xi$.
\end{comment}

Now that the coefficients are in $\Z$, the classical method of so
called {\it sliding handles} states that, under an additional
condition imposed on the index of the change of basis in (\ref{bz}),
namely $2\leq k\leq n-2$, the Barannikov normal form can be realized
by a gradient-like vector field for $f$.

More precisely, let $P: M_*(f)\to M_*(f)$ be a transformation matrix
where $P=\hbox{diag} (P_0,\dots ,P_n)$ with each $P_k\in GL(m_k,\Z)$
such that $P_k=id$ for $k=0,1$ or $n-1,n$, and $P_k$ is upper
triangular with $\pm 1$ in the diagonal entries for $2\leq k\leq
n-2$. Then one can construct a gradient-like vector field $V'$ such
that, if the matrix of the boundary operator for a given
gradient-like vector field $V$ is $A$, then the matrix for $V'$ is
given by $B=P^{-1}AP$.

Roughly speaking, one modifies $V$, each time for one $i\leq l$, by
sliding handle of the stable sphere\footnote{The stable and unstable
sphere is defined as : $S_L(\xi_l^k)=W^s(\xi_l^k)\cap L$ and
$S_R(\xi_i^k)=W^u(\xi_i^k)\cap L$ where $L=f^{-1}(c)$ for some $c\in
(f(\xi_i^k),f(\xi_l^k))$.}$S_L(\xi_l^k)$ of $\xi_l^k$ for $V$ such
that it sweeps across the unstable sphere $S_R(\xi_i^k)$ of
$\xi_i^k$ with indicated intersection number. In other words,
$S_L'(\xi_l^k)$ for the resulted $V'$ is the connected sum of
$S_L(\xi_l^k)$ and the boundary of a meridian disk of $S_R(\xi_i^k)$
described in section $4.4$ of \cite{FL}. One may refer to the Basis
Theorem (Theorem $7.6$) in \cite{ML} for a detailed construction of
$V'$.

\begin{rem}[on the ``proof'' of Corollary~\ref{cor39} in \cite{Ca}]
\label{rem42} \rm Capitanio uses the following\smallskip

\paragraph{\textsc{Criterion}}
\emph{A critical point $\xi$ of $f$ is free (over $\Q$\/) if and
only if , for any critical point $\eta$ incident to $\xi$,there is a
critical point $\xi'$, incident to $\eta$, such that}
\[
|f(\xi')-f(\eta)|<|f(\xi)-f(\eta)|\/.
\]
where fixing a gradient-like vector field $V$ generic for $f$, two
critical points are called incident if their algebraic number of
connecting trajectories is nonzero.

 Unfortunately, this is not true: one can construct a function
$f:\R^{2n}\to \R$\/, $n\geq 2$, quadratic at infinity with Morse
index $n$\/, having five critical points, two of index $n-1$ and
three of index $n$\/, whose gradient vector field $V$ defines the
Morse complex
\[
\p \xi_1^n=\xi_2^{n-1},\quad \p \xi_2^n=\xi_1^{n-1},\quad \p
\xi_3^n=0\/.
\]

\begin{comment}
\[
\p \xi_1^n=\xi_1^{n-1},\quad \p \xi_2^n=\xi_2^{n-1},\quad \p
\xi_3^n=0\/.
\]
\end{comment}
This complex can be reformulated into  \beaa && \p
\xi_1^n=(\xi_2^{n-1}-\xi_1^{n-1})+ \xi_1^{n-1}\\
&& \p(\xi_2^n+\xi_1^n)=(\xi_2^{n-1}-\xi_1^{n-1})+2\xi_1^{n-1}\\
&& \p (\xi_3^n+\xi_2^n)=\xi_1^{n-1}\eeaa

Hence, for a change of basis \[\xi_2^{n-1}\mapsto
\xi_2^{n-1}-\xi_1^{n-1},\quad\xi_2^n\mapsto \xi_2^n+\xi_1^n,
\quad\xi_n^3\mapsto \xi_n^3+\xi_n^2\] one can construct a
gradient-like vector field $V'$ for $f$ by sliding handles, such
that
\[ \p \xi_1^n= \xi_2^{n-1}+\xi_1^{n-1},\quad \p
\xi_2^n=\xi_2^{n-1}+2 \xi_1^{n-1},\quad \p \xi_3^n=\xi_1^{n-1}\/.
\]
\begin{comment}\[ \p \xi_1^n=
\xi_2^{n-1}+\xi_1^{n-1},\quad \p \xi_2^n=2\xi_2^{n-1}+
\xi_1^{n-1},\quad \p \xi_3^n=\xi_2^{n-1}\/.
\]
\end{comment}

 Obviously, $\xi_3^n$ is the only free critical
point, but $\xi_2^n$ satisfies the criterion (with incidences under
$V'$). \hfill$\Box$
\end{rem}
\bigskip

\section{An example of Laudenbach}
\label{sec4}

\begin{prop}\label{p4}
There exists an excellent Morse function $f:\R^{2n}\rightarrow\R$ as
follows:
\begin{enumerate}
  \item it is quadratic at infinity and the reference quadratic form has index and coindex $n>1$\/;
  \item it has exactly five critical points:  three of index $n$\/, one of index $n-1$ and one of index $n+1$\/;
  \item its Morse complex over $\Z$ is given by
\bea \nonumber
&&\p\xi_1^{n-1}= 0\\
\label{eq44}&& \p \xi_1^n =\xi_1^{n-1}, \quad\p \xi_2^n= -2\xi_1^{n-1},\quad  \p \xi_3^n= -\xi_1^{n-1} \\
\nonumber &&\p \xi_1^{n+1}= \xi_2^n-2\xi_3^n\/, \eea
\end{enumerate}
hence, for any field $\mathbb{F}_2$ of characteristic $2$ and any
field $\mathbb{F}$ of characteristic $\neq2$\/,
\begin{equation}
\label{eq45}
\m(f,\Z)=\m(f,\mathbb{F}_2)=\s(f,\mathbb{F}_2)=f(\xi_3^n)\;>\;f(\xi_2^n)=\m(f,\mathbb{F})=\s(f,\mathbb{F})=\s(f,\Z).
\end{equation}
\end{prop}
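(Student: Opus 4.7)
The plan splits into a geometric construction step and an algebraic computation step, with essentially all the real work concentrated in the former.

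For the construction, I would start from a nondegenerate quadratic form $Q$ on $\R^{2n}$ of index and coindex $n$ and attach, in a compact region, five handles corresponding to the prescribed critical points of indices $n-1$, $n$, $n+1$. The critical values would be placed in the order $f(\xi_1^{n-1}) < f(\xi_1^n) < f(\xi_2^n) < f(\xi_3^n) < f(\xi_1^{n+1})$, which is in any case forced by the requirement that nonzero intersection numbers in the Morse complex decrease $f$. One then uses the sliding-handle/basis theorem machinery of Smale (Theorem 7.6 of \cite{ML} and \S 4.4 of \cite{FL}) to adjust the gradient-like vector field so that stable and unstable spheres meet with the prescribed algebraic intersection numbers in \eqref{eq44}. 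The chain-complex condition $\p\circ\p=0$ checks out directly: $\p(\xi_2^n - 2\xi_3^n) = -2\xi_1^{n-1} + 2\xi_1^{n-1} = 0$. The technical hurdle is that the basis-theorem realization requires each index appearing in the basis change to lie in $[2,2n-2]$, forcing $n\geq 3$; the case $n=2$ needs an ad-hoc construction or a mild restatement. This construction step is where the main obstacle lies; everything downstream is linear algebra.

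For the computation of $\m(f,\Z)$, the plan is to identify $HM_n(f,\Z)$ explicitly via Theorem~\ref{thm36}. The kernel of $\p_n$ is the sublattice of $M_n(f,\Z)$ cut out by $a-2b-c=0$, freely generated by $K_1:=2\xi_1^n+\xi_2^n$ and $K_2:=\xi_1^n+\xi_3^n$. Since $\p\xi_1^{n+1}=\xi_2^n-2\xi_3^n=K_1-2K_2$, the quotient $HM_n(f,\Z)\simeq\Z$ is generated by $[K_2]$, with $[K_1]=2[K_2]$. Running through critical levels: below $f(\xi_3^n)$ only $K_1$ is available as a cycle, and its class $2[K_2]$ does not generate $\Z$; as soon as $c$ reaches $f(\xi_3^n)$, $K_2$ becomes a cycle in the sublevel Morse complex and $i_{c*}$ is onto. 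Hence $\m(f,\Z)=f(\xi_3^n)$. For $\s(f,\Z)$ I invoke Lemma~\ref{lem29}: using the symmetry of Proposition~\ref{prop38}, the Morse complex of $-f$ has $\p^{-f}\xi_1^n=0$, $\p^{-f}\xi_2^n=\xi_1^{n+1}$, $\p^{-f}\xi_3^n=-2\xi_1^{n+1}$, $\p^{-f}\xi_1^{n-1}=\xi_1^n-2\xi_2^n-\xi_3^n$; the generator of $HM_n(-f,\Z)\simeq\Z$ is represented by $\xi_3^n+2\xi_2^n$, which first becomes a cycle in the sublevel complex at level $-f(\xi_2^n)$, giving $\m(-f,\Z)=-f(\xi_2^n)$ and therefore $\s(f,\Z)=f(\xi_2^n)$.

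For the field cases, the same linear algebra yields different outcomes because $2$ is either invertible or zero. Over a field $\mathbb{F}$ of characteristic $\neq 2$, the relation $[K_1]=2[K_2]$ in $HM_n(f,\mathbb{F})\simeq\mathbb{F}$ means $[K_1]$ already generates, so the availability of $K_1$ at level $f(\xi_2^n)$ suffices and $\m(f,\mathbb{F})=f(\xi_2^n)$. Over $\mathbb{F}_2$, $\p\xi_2^n=0$ and $\p\xi_1^{n+1}=\xi_2^n$ mod~$2$, so $[\xi_2^n]=0$ in $HM_n(f,\mathbb{F}_2)$ and the generator can only be represented once $\xi_3^n$ enters the sublevel, giving $\m(f,\mathbb{F}_2)=f(\xi_3^n)$. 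Theorem~\ref{sm} supplies $\s=\m$ in both field cases. Combined with the strict ordering $f(\xi_2^n)<f(\xi_3^n)$ built into the construction, the four values assemble into \eqref{eq45}.
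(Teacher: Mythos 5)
Your reduction of \eqref{eq45} to the algebraic data \eqref{eq44} is correct: the generators $K_1=2\xi_1^n+\xi_2^n$, $K_2=\xi_1^n+\xi_3^n$ of $\ker\p_n$ over $\Z$, the relation $\p\xi_1^{n+1}=K_1-2K_2$, and the level-by-level analysis of the sublevel Morse complexes (together with Theorem~\ref{thm36}, Lemma~\ref{lem29}, Proposition~\ref{prop38} and Theorem~\ref{sm}) give exactly the values claimed; this is a slightly more direct route than the paper's, which pins down $\m(f,\Z)$ and $\s(f,\Z)$ by sandwiching them between field values via Proposition~\ref{prop211} and Corollary~\ref{cor37}.

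The genuine gap is in the construction step, and it is not merely a matter of missing detail: as literally described --- first place the critical values in the order $f(\xi_1^n)<f(\xi_2^n)<f(\xi_3^n)$ and then ``adjust the gradient-like field by sliding handles so that the intersection numbers are those of \eqref{eq44}'' --- the step fails. With the values fixed, a handle slide can only add to $\xi_\ell^n$ a multiple of a critical point $\xi_i^n$ with $f(\xi_i^n)<f(\xi_\ell^n)$ (the slide takes place in a level set between the two critical values), so the realizable modifications of the index-$n$ part of the complex are exactly conjugations $A\mapsto P^{-1}AP$ with $P$ upper triangular, unimodular, with respect to that ordering. Starting from the complex one actually obtains by attaching two cancelling pairs to $Q$ (one index-$n$ point paired with $\xi_1^{n-1}$, one with $\xi_1^{n+1}$, one free), \eqref{eq44} is unreachable this way: the row of $P$ that must become $(1,-2,-1)$ is forced to be the first row, while the column of $P^{-1}$ that must become $(0,1,-2)^T$ is forced to be the third, whose diagonal entry would then have to be $\pm2$, contradicting upper-triangular unimodularity. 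This is precisely why the paper inserts an extra step you omit: it performs the slides in a labeling where only \emph{lower} handles are added to \emph{higher} ones, and only afterwards invokes Milnor's rearrangement theorem (Theorem 4.1 of \cite{ML}) to interchange the critical values of two index-$n$ points --- legitimate because a Morse--Smale pair has no connecting orbits between critical points of equal index --- and then relabels to restore the convention, arriving at \eqref{eq44}. That rearrangement is the missing idea. Finally, your restriction $n\geq 3$ is unnecessary: the only slides needed are among index-$n$ handles in $\R^{2n}$, and the basis-theorem condition $2\leq n\leq 2n-2$ holds for every $n\geq2$, which is exactly the hypothesis $n>1$ of the proposition.
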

\smallskip

\noindent{\emph{Proof that \eqref{eq44} implies \eqref{eq45}.} The
Morse complex of $f$ over $\mathbb{F}_2$ writes \beaa && \p
\xi_1^{n-1}=0
\\
&&\p \xi_1^n = \xi_1^{n-1}, \quad \p \xi_2^n= 0,\quad \p (\xi_3^n+\xi_1^n)=0\\
&& \p \xi_1^{n+1}= \xi_2^n\/, \eeaa implying that $\xi_3^n$ is the
only free critical point, hence, by Corollary~\ref{cor37},
\[
\m(f,\mathbb{F}_2)=\s(f,\mathbb{F}_2)=f(\xi_3^n)\/;
\]
as $\m(f,\Z)\geq\m(f,\mathbb{F}_2)$ by Proposition~\ref{prop211} and
$\m(f,\Z)\leq f(\xi_3^n)$\/, we do have
\[
\m(f,\Z)=f(\xi_3^n)\/.
\]

Similarly (keeping the numbering of the critical points defined by
$f$\/) the Morse complex of $-f$ over $\mathbb{F}$ has the
Barannikov normal form \beaa &&\p (-2\xi_1^{n+1})= 0
\\
&&\p \xi_3^n = -2\xi_1^{n+1},\quad\p (\xi_2^n+\frac{1}{2}\xi_3^n)=0,
\quad \p (-\xi_3^n-2\xi_2^n+\xi_1^n)=0
\\
&&\p \xi_1^{n-1}= -\xi_3^n-2\xi_2^n+\xi_1^n\/, \eeaa showing that
the free critical point is $\xi_2^n$\/; hence, by
Corollary~\ref{cor37} and Proposition~\ref{prop38},
\[
\s(f,\mathbb{F})=\m(f,\mathbb{F})=f(\xi_2^n)\/;
\]
finally, as we have $\s(f,\Z)\leq\s(f,\mathbb{F})$ by
Proposition~\ref{prop211}, and $\s(f,\Z)\geq f(\xi_1^n)$\/, we
should prove $\s(f,\Z)> f(\xi_1^n)$\/, which is obvious since
$\xi_1^n$ and $\xi_1^{n+1}$ are boundaries in $M_*(-f,\Z)$\/.
\medskip

\noindent{\emph{How to construct such a function $f$.} It is easy to
construct a function $f_0:\R^{2n}\to \R$ with properties $(1)$ and
$(2)$ required in the proposition and whose gradient vector field
$V_0$ provides a Morse complex given by \beaa &&\p \xi_1^{n-1}= 0,\\
&& \p\xi_1^n=\xi_1^{n-1},\quad \p\xi_2^n=0,\quad \p\xi_3^n=0\\&&
\p\xi_1^{n+1}=\xi_3^n.\eeaa

For a change of basis
\[\xi_2^n\mapsto \xi_2^n-\xi_1^n,\quad
\xi_3^n\mapsto\xi_3^n-2(\xi_2^n-\xi_1^n)\] one can construct a
gradient-like vector field $V'$ for $f_0$ by sliding handles, such
that \beaa && \p\xi_1^{n-1}=0\\&&\p \xi_1^n=\xi_1^{n-1},\quad
\p\xi_2^n=-\xi_1^{n-1},\quad \xi_3^n=-2\xi_1^{n-1}\\&&
\p\xi_1^{n+1}=-2\xi_2^n+\xi_3^n\eeaa

Since $(f_0,V')$ is Morse-Smale, the invariant manifolds of those
critical points of the same index are disjoint, hence one can modify
$f_0$ to $f$ such that
\begin{itemize}
\item $f$ has the same critical points of $f_0$;
\item the ordering of critical points for $f$ is $f(\xi_2^n)>f(\xi_3^n)>f(\xi_1^n)$,
\item $V'$ is a gradient-like vector field for $f$.
\end{itemize}
This can be realized by the preliminary rearrangement theorem
(Theorem $4.1$) in \cite{ML}.

In other words, we have made a change of critical points
$\xi_2^n\leftrightarrow \xi_3^n$, hence obtain the required Morse
complex in the proposition.

\bigskip

\paragraph{\textsc{Acknowledgements}} I wish to thank
F.Laundenbach for his example which asserts the difference between
minmax and maxmin in coefficients $\Z$. And I am also grateful for
the useful discussions with A. Chenciner and M.Chaperon.

\end{document}